\newtheorem{thm}{Theorem}[section] 
\newtheorem{cor}[thm]{Corollary}
\newtheorem{defn}[thm]{Definition}
\newtheorem{lem}[thm]{Lemma}
\newtheorem{prop}[thm]{Proposition}
\newtheorem{rem}[thm]{Remark}
\newcommand\operA[2]{{\if!#2!\operatorname{#1}\else{\operatorname{#1}_{#2}^{\phantom{I}}}\fi}} 
\newcommand\set[1]{\{#1\}}
\newcommand\Lref[1]{{Lemma~\ref{#1}}}%
\newcommand\Pref[1]{{Proposition~\ref{#1}}}%
\newcommand\Cref[1]{{Corollary~\ref{#1}}}%
\newcommand\Rref[1]{{Remark~\ref{#1}}}%
\newcommand\Sref[1]{{Section~\ref{#1}}}%
\def\sub{\subseteq}
\def\tr{{\operatorname{Tr}}}
\def\dim{{\operatorname{dim}}}
\def\Z{\mathbb{Z}}
\def\s{\sigma}
\newcommand\res[2][p]{(#2)_{#1}}
\newcommand\Norm[1][]{\operA{N}{#1}}
\newcommand\mul[1]{{#1^{\times}}} 
\newcommand{\Trace}[1][]{\if!#1!\operatorname{Tr}\else{\operatorname{Tr}_{#1}^{\phantom{I}}}\fi} 
\long\def\forget#1\forgotten{{}} %
\newcommand\suchthat{{\,:\ \,}}
\newcommand\subjectto{{\,|\ }}
\newcommand\lam{{\lambda}}
\def\({\left(}
\def\){\right)}
\newcommand\isom{{\,\cong\,}}
\newcommand{\lead}{\operatorname{top}}
\newif\iffurther
\newif\ifXY 
\newcommand\sg[1]{{\left<#1\right>}}
\journal{"Journal of Pure and Applied Algebra"}
\begin{document}

\begin{frontmatter}

\title{Kummer Spaces in Cyclic Algebras of Prime Degree}

\author{Adam Chapman}
\ead{adam1chapman@yahoo.com}
\address{Department of Mathematics, Michigan State University, East Lansing, MI 48824}
\author{David J.~Grynkiewicz}
\ead{diambri@hotmail.com}
\address{University of Memphis, Department of Mathematical Sciences, Memphis, TN 38152, USA}
\author{Eliyahu Matzri}
\ead{elimatzri@gmail.com}
\address{Department of Mathematics, Ben-Gurion University, Beer-Sheva, Israel}
\author{Louis H.~Rowen}
\ead{rowen@math.biu.ac.il}
\author{Uzi Vishne}
\address{Department of Mathematics, Bar-Ilan University, Ramat-Gan 5290002, Israel}
\ead{vishne@math.biu.ac.il}

\begin{abstract}
We classify the monomial Kummer subspaces of division cyclic algebras of prime degree $p$, showing that every such space is standard, and in particular the dimension is no greater than $p+1$.
It follows that in a generic cyclic algebra, the dimension of any Kummer subspace is at most $p+1$.
\end{abstract}

\begin{keyword}
Central Simple Algebras, Cyclic Algebras, Kummer Spaces, Generic Algebras, Zero Sum Sequences
\MSC[2010] Primary 16K20; Secondary 11J13
\end{keyword}

\end{frontmatter}

\section{Introduction}

Given an integer $n$ and a central simple $F$-algebra $A$ whose degree is a multiple of $n$, an $n$-Kummer element is an element $v \in A$ satisfying $v^n \in F^\times$ and $v^{n'} \not \in F$ for any $1 \leq n' < n$. (We omit $n$ when it is obvious from the context.)
These elements play an important role in the structure and presentations of these algebras.
For example, in case $\deg(A)=n$ and $F$ is a field of characteristic prime to $n$ containing a primitive $n$th root of unity, $A$ is cyclic if and only if it contains a Kummer element.
Without roots of unity, this equivalence holds when $n$ is prime, but there are counterexamples for general $n$. (See \cite{MRV}.)

A Kummer subspace of $A$ is an $F$-vector subspace $V$ where every $v \in V \setminus \set{0}$ is Kummer.
In case $F$ is of characteristic prime to $n$ containing a primitive $n$th root of unity $\rho$, every cyclic algebra of degree $n$ over $F$ can be presented as
$$F[x,y \subjectto x^n=\alpha,\, y^n=\beta,\, y x y^{-1}=\rho x]$$
for some $\alpha,\beta \in F^\times$.
Assume $A$ is a tensor product of $m$ cyclic algebras of degree $n$ over $F$, and fix a presentation
$$A=\bigotimes_{k=1}^m F[x_k,y_k : x_k^n=\alpha_k, y_k^n=\beta_k, y_k x_k y_k^{-1}=\rho x_k].$$
\begin{defn}
A {\bf{monomial Kummer subspace}} of $A$ (with respect to that fixed presentation) is a Kummer space spanned by elements of the form $\prod_{k=1}^m x_k^{a_k} y_k^{b_k}$ for some $0 \leq a_1,b_1,\dots,a_m,b_m \leq n-1$.
\end{defn}

Assume from now on that $n = p$ is prime. In \cite{Matzri}, the
author made use of the existence of $(m p+1)$-dimensional monomial
Kummer spaces in $A$ to prove that the symbol length of any central
simple $F$-algebras is bounded from above by $p^{r-1}-1$ when $F$ is
a $C_r$ field. We are interested therefore in the maximal possible
dimension of Kummer spaces in general, and monomial Kummer spaces in
particular. Another motivation comes from the generalized Clifford
algebras: if $p+1$ is indeed the maximal dimension of a Kummer space
in a cyclic algebra of degree $p$, as we conjecture, then the
Clifford algebra of a nondegenerate homogeneous polynomial form of
degree $p$ in more than $p+1$ variables cannot have simple images of
degree $p$. (See \cite{ChapVish1} for more information on
generalized Clifford algebras.)

In tensor products of $m$ quaternion algebras, the dimension of Kummer spaces is bounded by $2 m+1$. This is an immediate result of the theory of Clifford algebras of quadratic forms. (See \cite{Lam} for further information.)
The Kummer subspaces of cyclic algebras of degree 3 were classified in \cite{Raczek}, and then in \cite{MV1} and \cite{MV2}, using techniques of composition algebras suggested by \mbox{J.-P.~Tignol}.
The monomial Kummer subspaces of the tensor product of $m$ cyclic algebras of degree $3$ were classified in \cite{Chapman4}, establishing an upper bound of $3 m+1$. This upper bound holds also for non-monomial Kummer spaces in the generic tensor product of $m$ cyclic algebras.

In this paper we study Kummer subspaces in cyclic algebras of degree $p$ for any prime $p$.
We prove that the dimension of monomial Kummer spaces in such algebras is bounded by $p+1$. The proof of this  algebraic fact requires a nontrivial result from elementary number theory (\cite{Index2}, see also \cite{ProjSum}). Finally, we prove in~\Sref{sec:gen} that $p+1$ is the upper bound for the dimension of any Kummer subspace in the generic cyclic algebra.

\section{Kummer subspaces}\label{sec:intro}

Let $p$ be a prime number, $F$ be a field of characteristic either 0 or greater than $p$ containing a primitive $p$th root of unity $\rho$, and $A$ be a cyclic division algebra of degree $p$ over $F$. The variety $X_A$ of all Kummer elements in $A$ is defined by the condition
$s_1 = \cdots = s_{p-1} = 0$,
where $s_i$ are the generic characteristic coefficients.
We assume that $p \geq 5$.

\subsection{Standard Kummer subspaces}

Let $x \in X_A$. For any $1 \leq k \leq p-1$ we set
$$V_k(x) = Fx + \set{w \in A \suchthat wx = \rho^k xw}.$$

\begin{prop}
Fix $k$.
\begin{enumerate}
\item For every $x \in X_A$, $V_k(x)$ is a Kummer space.
\item The Kummer space $V_k(x)$ determines $x$ up to a scalar factor.
\end{enumerate}
\end{prop}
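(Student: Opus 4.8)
The plan is to choose, for the given $x\in X_A$, a standard presentation of $A$ in which $x$ is one of the two generators, and then to settle both statements by linear algebra in the resulting grading. Concretely: since $x\in X_A$ and $\rho\in F$, the subfield $F[x]$ is a cyclic Galois (hence maximal) extension of $F$; letting $\sigma$ be the generator of $\Gal(F[x]/F)$ with $\sigma(x)=\rho x$ and extending it to an inner automorphism of $A$ via Skolem--Noether yields $y\in A$ with $yxy^{-1}=\rho x$, and the usual centralizer argument gives $y^p=:\beta\in F^\times$. Thus $A=\bigoplus_{j=0}^{p-1}F[x]\,y^j$; since conjugation by $x$ acts on $F[x]\,y^j$ as the scalar $\rho^{j}$ and these scalars are pairwise distinct, the set $\set{w\suchthat wx=\rho^k xw}$ is exactly the eigenspace $F[x]\,y^k$, whence $V_k(x)=Fx\oplus F[x]\,y^k$, of dimension $p+1$.

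\emph{Part (1).} Write a general element of $V_k(x)$ as $w=cx+f(x)\,y^k$ with $c\in F$ and $f\in F[x]$. As $\rho^k$ is a primitive $p$th root of unity and $(f(x)y^k)(cx)=\rho^k(cx)(f(x)y^k)$, the vanishing of the $q$-binomial coefficients $\binom{p}{j}_{\rho^k}$ for $0<j<p$ gives $w^p=(cx)^p+(f(x)y^k)^p$. Here $(cx)^p=c^p\alpha$, while $y^kf(x)y^{-k}=f(\rho^k x)$ together with the fact that $i\mapsto ik$ permutes the residues modulo $p$ give $(f(x)y^k)^p=\big(\prod_{j=0}^{p-1}f(\rho^{j}x)\big)\beta^k=\operatorname{N}_{F[x]/F}(f(x))\,\beta^k$; so $w^p\in F$. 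If $w\neq 0$ then $w\notin F$, since its two summands lie in distinct graded pieces and $x\notin F$; as $A$ has prime degree, $F[w]$ is then a field of degree $p$ over $F$, so $w^{p'}\notin F$ for $1\le p'<p$, while $w^p\neq 0$ because $A$ is a division algebra. Hence every nonzero $w\in V_k(x)$ is Kummer.

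\emph{Part (2).} It is enough to recover the line $Fx$ intrinsically from $V:=V_k(x)$ and $k$. For $v=ax+g(x)y^k$ and $w=cx+f(x)y^k$ in $V$, expanding $wv-\rho^k vw$ in the grading $A=\bigoplus_j F[x]y^j$ leaves only pieces in degrees $0$, $k$, and $2k$ (pairwise distinct modulo $p$, as $p$ is odd), namely $ac(1-\rho^k)x^2$ in degree $0$, $c(1-\rho^{2k})\,xg(x)\,y^k$ in degree $k$, and $\big(f(x)g(\rho^k x)-\rho^k g(x)f(\rho^k x)\big)y^{2k}$ in degree $2k$. If $v\in Fx\setminus\set{0}$, then $\set{w\in V\suchthat wv=\rho^k vw}=F[x]y^k$, of dimension $p$. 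If instead $v\notin Fx$ (so $g\neq 0$), the degree-$k$ piece forces $c=0$, and the degree-$2k$ piece then forces $f(x)/g(x)$ to lie in the $\rho^{-k}$-eigenspace of the automorphism $x\mapsto\rho^k x$ of the field $F[x]$, which is one-dimensional; so the set has dimension $1$. Since $p>1$, the line $Fx$ is the unique line $\ell\subseteq V$ with $\dim_F\set{w\in V\suchthat wv=\rho^k vw}=p$ for every $v\in\ell\setminus\set{0}$, a description depending only on $V$ and $k$. Therefore $V_k(x)=V_k(x')$ implies $Fx=Fx'$, i.e.\ $x'$ is a scalar multiple of $x$.

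\emph{Main obstacle.} The routine-looking but load-bearing step is the graded expansion of $wv-\rho^k vw$ and the dimension count it yields; its substantive inputs are the vanishing of the $q$-binomial coefficients and the one-dimensionality of the eigenspaces of the Galois action on $F[x]$, both of which rely on $\rho$ being a \emph{primitive} $p$th root of unity and on $p$ being an odd prime (so that $0,k,2k$ are pairwise distinct modulo $p$).
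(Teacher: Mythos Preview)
Your proof is correct. Part~(1) is essentially the paper's argument with more detail (the paper simply states $(x+cy^k)^p = x^p + \Norm[{F[x]/F}](c)y^{kp}$, while you justify this via the vanishing $q$-binomial coefficients and add the check that $w^{p'}\notin F$ for $0<p'<p$).

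Part~(2) takes a genuinely different route. The paper assumes $V_1(x)=V_1(x')$, picks Kummer elements $y,y'$ with $yxy^{-1}=\rho x$ and $y'x'y'^{-1}=\rho x'$, writes $x'=\alpha x+wy$ and $y'=\beta x+w'y$ inside $V_1(x)$, expands the relation $y'x'=\rho x'y'$, and reads off $\alpha\beta=0$, eventually forcing $x'\in Fx$. Your argument instead gives an \emph{intrinsic} characterization of the line $Fx$ inside $V=V_k(x)$: it is the unique line whose nonzero vectors have a $p$-dimensional $\rho^k$-commutant $\{w\in V:wv=\rho^k vw\}$, all other lines giving a one-dimensional commutant. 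This avoids introducing an auxiliary $y'$ and is conceptually cleaner; the paper's computation is shorter but more ad hoc. Your graded expansion of $wv-\rho^k vw$ and the one-dimensionality of the eigenspaces of $\sigma$ on $F[x]$ do the real work, and both are valid under the standing hypothesis $p\geq 5$ (indeed $p$ odd suffices for $0,k,2k$ to be distinct modulo $p$).
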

\begin{proof}
Let $x \in X_A$. By the Skolem-Noether Theorem, there is a Kummer element $y$ such that $yxy^{-1} = \rho x$, and then $V_k(x) = Fx+F[x]y^k$.  For every $c \in F[x]$, $(x+cy^k)^p = x^p + \Norm[{F[x]/F}](c)y^{kp} \in F$, proving that $Fx+F[x]y^k$ is a Kummer space.

Suppose $V_1(x) = V_1(x')$ for $x,x' \in X_A$. As before let $y,y' \in X_A$ be elements such that $yxy^{-1} = \rho x$ and $y' x' y' = \rho x'$. Let $\s$ denote the automorphism of $F[x]$ induced by conjugation by $y$. Since $x', y' \in V_k(x)$, we can write $x' = \alpha x+ w y$ and $y' = \beta x + w' y$ for
$\alpha, \beta \in F$ and $w, w' \in F[x]$. The
condition $y'x' = \rho x'y'$ gives
\begin{equation}\label{mul}\begin{aligned}\alpha \beta x^2+ & (\beta x w  + \rho \alpha x w') y + w' \s(w) y^2
  \\
& =  \rho \alpha\beta x^2 + (\rho^2 \beta x w + \rho \alpha x w')
y+ \rho w \s(w') y^2,
\end{aligned}\end{equation}
which implies $\alpha \beta = 0$. If $\beta \neq 0$ then $\alpha =
0$ implies $w = 0$, which is impossible. Therefore $\beta = 0$,
and the remaining equation is $$w' \s(w) = \rho w \s(w'),$$ from
which it follows that $w  \in F xw'$. But since $x'y' \in V_1(x')
= V_1(x)$, the coefficient of $y^2$ in $x'y'$ must be zero, and hence $w \s(w') = 0$. However $w' \neq 0$, and therefore $x' \in Fx$.

The general argument is obtained by replacing $\rho$ with $\rho^k$.
\end{proof}

\begin{defn}
A Kummer subspace $V \sub A$ is called {\bf{standard}} if it is contained in a space of the form $V_k(x)$ for some Kummer element $x$ and $0 \leq k \leq p-1$.
\end{defn}

\subsection{Criteria for being Kummer}

In order to simplify the expressions, we adopt the following symmetric product notation from \cite{Revoy}:
Given $v_1,\dots,v_t \in A$, let $v_1^{i_1} * \cdots * v_t^{i_t}$ denote the sum of the products of the elements $v_1,\dots,v_1,v_2,\dots,v_2,\dots,v_t,\dots,v_t$ in all possible rearrangements, where each $v_k$ appears exactly $i_k$ times. The superscript $i_k = 1$ is omitted, so for example $x^1 * y^2 = x * y^2$. 
The exponentiation notation is used strictly in this sense. We use parentheses when the symmetric product is applied to monomials. For instance, $(x^3)^2 * (y^5)=x^6 y^5+x^3 y^5 x^3+y^5 x^6$.

\begin{prop}\label{crit=p}
Let $v_1,\dots,v_t \in A$. The subspace $V = Fv_1+\cdots+Fv_t$ is Kummer if and only if
$$v_1^{i_1} * \cdots * v_t^{i_t} \in F$$
for every $i_1,\dots,i_t \geq 0$ with $i_1+\cdots+i_t=p$.
\end{prop}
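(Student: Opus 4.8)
The plan is to read off both directions from the noncommutative multinomial expansion of a $p$-th power. For scalars $\lambda_1,\dots,\lambda_t \in F$ one has, in $A$,
\begin{equation*}
\Big(\sum_{k=1}^{t}\lambda_k v_k\Big)^{p} \;=\; \sum_{\substack{i_1,\dots,i_t\ge 0\\ i_1+\cdots+i_t=p}} \Big(\prod_{k=1}^{t}\lambda_k^{i_k}\Big)\,\bigl(v_1^{i_1}*\cdots*v_t^{i_t}\bigr),
\end{equation*}
since expanding the power gives the sum of all $t^{p}$ words of length $p$ in the letters $v_1,\dots,v_t$, and collecting the words of a fixed content $(i_1,\dots,i_t)$ produces exactly $v_1^{i_1}*\cdots*v_t^{i_t}$ — by the very definition of the symmetric product — weighted by the central scalar $\prod_k\lambda_k^{i_k}$. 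I will also use two elementary facts: since $A$ is a nontrivial division algebra, $F$ is infinite; and, $p$ being prime, every subfield of $A$ has degree $1$ or $p$ over $F$, so an element $v\in A$ with $v^p\in F$ is either a scalar or Kummer (in the latter case $1,v,\dots,v^{p-1}$ are $F$-independent, and $v^p\in F^\times$ because $A$ is a division algebra).

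For the ``if'' direction, assume every symmetric product $v_1^{i_1}*\cdots*v_t^{i_t}$ with $i_1+\cdots+i_t=p$ lies in $F$. The displayed identity then gives $v^p\in F$ for every $v\in V$. For $V$ not contained in $F$ — the only case of interest, and automatic when the $v_k$ are honest monomials — a nonzero $v\in V$ is not a scalar, hence Kummer by the fact noted above, so $V$ is a Kummer space.

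For the ``only if'' direction, assume $V$ is a Kummer space, so $v^p\in F$ for every $v\in V$ (the case $v=0$ being trivial). Fix an $F$-linear projection $\pi\colon A\to W$ onto a vector-space complement $W$ of $F\cdot 1$ in $A$, so that $a\in F\iff\pi(a)=0$. Applying $\pi$ to the displayed identity shows that the element $\sum_{i_1+\cdots+i_t=p}\bigl(\prod_k\lambda_k^{i_k}\bigr)\,\pi\bigl(v_1^{i_1}*\cdots*v_t^{i_t}\bigr)\in W$ vanishes for every $\lambda_1,\dots,\lambda_t\in F$. This exhibits a polynomial function $F^t\to W$ that is identically zero; since $F$ is infinite, all of its coefficients vanish, i.e.\ $\pi(v_1^{i_1}*\cdots*v_t^{i_t})=0$, equivalently $v_1^{i_1}*\cdots*v_t^{i_t}\in F$, for all $(i_1,\dots,i_t)$ with $i_1+\cdots+i_t=p$.

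The only step that is not pure bookkeeping is the passage from ``$v^p\in F$ for every $v\in V$'' to ``every degree-$p$ symmetric product lies in $F$'', and this is exactly where the infinitude of $F$ is indispensable — over a finite field a nonzero polynomial could vanish on all of $F^t$. The remaining ingredients (the multinomial expansion, and the structure of subfields of a division algebra of prime degree) are standard.
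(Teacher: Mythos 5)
Your proof is correct and takes essentially the same route as the paper: expand $(\lambda_1 v_1+\cdots+\lambda_t v_t)^p$ into symmetric products with central scalar coefficients $\lambda_1^{i_1}\cdots\lambda_t^{i_t}$ and use the infinitude of $F$ to pass between ``every element of $V$ has $p$-th power in $F$'' and ``every degree-$p$ symmetric product lies in $F$''. Your extra care in distinguishing ``$v^p\in F$'' from ``$v$ is Kummer'' only concerns the degenerate case $V\subseteq F$, which the paper tacitly ignores (and indeed, once some $v\in V\setminus F$ is Kummer, no nonzero scalar $c$ can lie in $V$, since $(c+v)^p\notin F$ in characteristic prime to $p$).
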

\begin{proof}
By definition $V = Fv_1+\cdots+Fv_t$ is Kummer if and only if $\lam_1 v_1+\cdots+\lam_t v_t$ is Kummer for every $\lam_1,\dots,\lam_t \in F$,
i.e.
$$\sum_{i_1,\dots,i_t} (v_1^{i_1}* \cdots * v_t^{i_t})\lam_1^{i_1} \cdots \lam_t^{i_t} =  (\lam_1 v_1+\cdots+\lam_t v_t)^p \in F.$$
Since $F$ is infinite, the latter is equivalent to having the coefficients
$v_1^{i_1}* \cdots * v_t^{i_t}$ in~$F$.
\end{proof}

\begin{rem}\label{nocomm}
Assume that $Fv+Fv'$ is Kummer where $v$ and $v'$ commute. Then
$v$ and $v'$ are linearly dependent.

Indeed, $pv^{p-1}v' = v^{p-1} * v' \in F$, so $v^{-1}v' \in F$.
\end{rem}

\begin{thm}\label{maxim}
For every $x \in X_A$ and $k$, $V_k(x)$ is maximal with respect to inclusion as a Kummer subspace.
\end{thm}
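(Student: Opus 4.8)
The plan is to show that if $V_k(x)$ is strictly contained in a Kummer subspace $V$, we reach a contradiction. Pick $z \in V \setminus V_k(x)$; then $V_k(x) + Fz$ is Kummer, and in particular $Fx + F[x]y^k + Fz$ is Kummer, where $y$ is chosen (Skolem--Noether) so that $yxy^{-1} = \rho x$, using the description $V_k(x) = Fx + F[x]y^k$ from the proof of Proposition~1. Writing $z = \sum_{j=0}^{p-1} c_j y^j$ with $c_j \in F[x]$, the assumption $z \notin V_k(x)$ means that some $c_j \neq 0$ for $j \notin \{0,k\}$, or else $c_0 \notin Fx$. The strategy is to isolate one such ``bad'' coefficient and use the Kummer criterion of \Pref{crit=p} to force it to vanish.

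First I would handle the cleanest case: suppose $z = c y^j$ is itself homogeneous in $y$ of a single degree $j$ with $c \in F[x]^\times$ (we can always reduce to studying one graded component, since the graded components of $z$ relative to $V_k(x)$ must separately interact well with $x$ and $y^k$). Then consider the element $x + \lambda z \in V$: by \Rref{nocomm} applied inside $V_k(x)$, and more usefully by expanding $(x + cy^j)^p$, the coefficient of $y^{jp}$ is a norm and lies in $F$, but the \emph{mixed} terms — e.g. the coefficient of $y^j$ in $x^{p-1} * (cy^j)$, which is $\big(\sum_{i=0}^{p-1}\rho^{-ij} x^i\, c\, x^{p-1-i}\big)$-type expression — must also lie in $F$. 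Because $\rho^j \neq 1$ when $j \neq 0$, the geometric-type sum over $F[x]$ collapses (this is where commutation relations $y^j x = \rho^j x y^j$ do the work), and one concludes $c = 0$ unless $j = 0$. For $j = 0$, i.e. $z = c_0 \in F[x]$, the condition that $Fx + F[x]y^k + Fc_0$ be Kummer together with $Fx+Fc_0$ being a commutative Kummer space forces $c_0 \in Fx$ by \Rref{nocomm}, so $z \in V_k(x)$, a contradiction.

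The main obstacle is the reduction to a single graded component: a general $z \in V \setminus V_k(x)$ has several nonzero $c_j$, and the symmetric products $v_1^{i_1} * \cdots * v_t^{i_t}$ from \Pref{crit=p} mix all the degrees together, so one cannot naively ``look at one $c_j$ at a time.'' I expect the right device is to exploit that the subgroup $\mu_p = \langle \rho \rangle$ acts on $A$ by the inner automorphism $\operatorname{Ad}(y)$, which is defined over $F$ and preserves $X_A$ and hence any maximal Kummer space issue; averaging $z$ over this action (or rather extracting its $\rho^k$-eigencomponent under $\operatorname{Ad}(y)$) should let us assume $z$ lies in a single $\operatorname{Ad}(y)$-eigenspace, i.e. $z = c_j y^j$ for one $j$ — but one must check that this eigencomponent still lies in $V$, which uses that $V$ is spanned by Kummer elements and the invariance of the Kummer condition, or alternatively a Vandermonde argument on finitely many scalars. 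Once the reduction is in place, the two cases $j \neq 0$ and $j = 0$ above finish the argument; the $j \neq 0$ case ultimately rests on the same type of computation $\Norm[{F[x]/F}]$-versus-trace identity already used to prove part (1) of Proposition~1, and the $j=0$ case is immediate from \Rref{nocomm}.
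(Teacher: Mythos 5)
Your proposal has a genuine gap at exactly the place you flag as the ``main obstacle'': the reduction of $z$ to a single graded component is not available, and the averaging device you suggest does not supply it. To isolate the $y^j$-component of $z$ you would conjugate by $x$ (not by $y$, as written), and while each conjugate space $x^i(V_1(x)+Fz)x^{-i}=V_1(x)+Fx^izx^{-i}$ is again Kummer, the Vandermonde identity only expresses $c_jy^j$ as a linear combination of the conjugates $x^izx^{-i}$, which lie in \emph{different} Kummer spaces. Since ``Kummer'' is not a linear condition, you cannot conclude that $V_1(x)+Fc_jy^j$ is Kummer; indeed, knowing that each graded component of $z$ again extends $V_1(x)$ to a Kummer space is essentially equivalent to the theorem itself, so the reduction is circular as stated. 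The paper's proof sidesteps the reduction entirely: writing $z=\sum\alpha_{a,b}x^ay^b$, for each $(a,b)$ it picks $\ell\not\equiv 0$ with $x^{a\ell}y^{b\ell}\in V_1(x)$, applies \Pref{crit=p} to get $(x^{a\ell}y^{b\ell})^{p-1}*z\in F$, and reads off the coefficient of the single monomial $x^{a(1-\ell)}y^{b(1-\ell)}$, which is $p\alpha_{a,b}(x^p)^{a\ell}(y^p)^{b\ell}$ by \Lref{nonzcoef}; if $x^ay^b\notin V_1(x)$ then $\ell\neq 1$, the monomial is non-central, and $\alpha_{a,b}=0$. The ``mixing of degrees'' you worried about is harmless because only the $(a,b)$-term of $z$ can contribute to that particular degree.

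The homogeneous case is also not closed by the computation you sketch. For $z=cy^j$ with $c\in F[x]$ and $j\not\equiv 0\pmod p$, the product you invoke is identically zero: $x^{p-1}*(cy^j)=\bigl(\sum_{m=0}^{p-1}\rho^{jm}\bigr)x^{p-1}cy^j=0$, so it imposes no condition on $c$. More fundamentally, no expansion of $(x+\lambda cy^j)^p$ alone can force $c=0$, because $Fx+F[x]y^j\subseteq V_j(x)$ is a Kummer space for \emph{every} $j\neq 0$; the obstruction to enlarging $V_1(x)$ only appears in symmetric products that also involve elements of $F[x]y$, which is precisely what the paper's choice $x^{a\ell}y^{b\ell}$ (with $\ell\equiv b^{-1}$) provides. (Minor points: the admissible components of $z$ are $j=k$ and the $Fx$ part of $j=0$, not just $j=0$; and your $j=0$ case via \Rref{nocomm}, as well as the reduction to $k=1$, are fine.) As it stands, then, both the reduction step and the computation in the reduced case need to be replaced, and the paper's coefficient-extraction argument is the natural repair.
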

\begin{proof}
The proof appears in a more general context in \cite{Chapman4}.
As before it suffices to prove that $V_1(x)$ is maximal. Let $y$
be an invertible element such that $yxy^{-1} = \rho x$, so that $V
= V_1(x) = Fx+F[x]y$. Let $z \in A$, and assume $V+Fz$ is
Kummer; we need to show that $z \in V$. Write $z = \sum_{a=0}^{p-1} \sum_{b=0}^{p-1}
\alpha_{a,b}x^ay^b$ for $\alpha_{a,b} \in F$. (We have $\alpha_{0,0}=0$ because $\tr(z)=0$.) For every $a,b$, there
exists some $\ell \not \equiv 0 \pmod{p}$ such that
$x^{a\ell}y^{b\ell} \in V$:
If $b \neq 0$ then take $\ell \equiv b^{-1} \pmod{p}$. Otherwise take $l \equiv a^{-1} \pmod{p}$.
For any $a$ and $b$,
\begin{equation*}\label{mul1}\begin{aligned}\sum_{ij}\alpha_{i,j}
((x^{a\ell}y^{b\ell})^{p-1} * (x^iy^j)) &=
(x^{a\ell}y^{b\ell})^{p-1} * \sum_{ij} \alpha_{i,j} (x^iy^j) \\ &=
(x^{a\ell}y^{b\ell})^{p-1} * z \in F .\end{aligned}\end{equation*}
The coefficient of $x^{a(1-\ell)}y^{b(1-\ell)}$ in this sum is
$$\alpha_{a,b}(x^{a\ell}y^{b\ell})^{p-1} * (x^ay^b) = p
\alpha_{a,b}(x^{a\ell}y^{b\ell})^{p} = p(x^p)^{a\ell}(y^p)^{b\ell}
\alpha_{a,b},$$ so if the monomial $x^ay^b$ is not in $V$, then
$\ell \neq 1$ and necessarily $\alpha_{a,b} = 0$. Consequently $z
\in V$.
\end{proof}

We conclude this section with another criterion for a subspace to be Kummer. We denote the reduced trace by $\tr(\cdot)$.
\begin{lem}\label{basic}
Let $b_1,\dots,b_t \in A$. The subspace $V = Fb_1+\cdots+Fb_t$ is Kummer if and only if $\tr(b_1^{i_1} * \cdots * b_t^{i_t}) = 0$ for every $i_1,\dots,i_t \geq 0$ satisfying $i_1+\cdots+i_t < p$.
\end{lem}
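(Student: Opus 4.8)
The plan is to derive this from \Pref{crit=p}, which already characterizes the Kummer condition by the vanishing (inside $F$) of all symmetric products $b_1^{i_1}*\cdots*b_t^{i_t}$ of total degree exactly $p$. So one direction is essentially a bookkeeping exercise: I first want to reduce the ``degree $<p$'' symmetric products to the ``degree $=p$'' ones. Given any tuple $(i_1,\dots,i_t)$ with $i_1+\cdots+i_t=p$, write $b=\lam_1 b_1+\cdots+\lam_t b_t$ for indeterminate $\lam_j$. Expanding $b^p=\sum (b_1^{i_1}*\cdots*b_t^{i_t})\lam_1^{i_1}\cdots\lam_t^{i_t}$ and taking reduced traces term by term, the coefficient of a given monomial is $\tr(b_1^{i_1}*\cdots*b_t^{i_t})$. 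Since a degree-$p$ Kummer element $b$ has $b^p\in F$, all its lower characteristic coefficients $s_1(b),\dots,s_{p-1}(b)$ vanish; in particular $\tr(b)=\tr(b^2)=\cdots$ up through the relevant Newton-identity range vanish. Thus the hard content is entirely in showing that the vanishing of $\tr(b^m)$ for $1\le m\le p-1$ (equivalently, of $\tr(b_1^{i_1}*\cdots*b_t^{i_t})$ for $i_1+\cdots+i_t<p$) is \emph{equivalent} to $b$ being Kummer, not merely implied by it.

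For that equivalence I would argue as follows. If $V$ is Kummer, then every $b\in V$ satisfies $b^p\in F$, so $b^p=\norm(b)$ and the reduced characteristic polynomial of $b$ is $\lambda^p-\norm(b)$; hence $s_1(b)=\cdots=s_{p-1}(b)=0$, and by the Newton identities (valid since $\charac F=0$ or $>p$, so $1,2,\dots,p-1$ are invertible) this forces $\tr(b^m)=0$ for all $1\le m\le p-1$. Conversely, suppose $\tr(b^m)=0$ for $1\le m\le p-1$. The Newton identities then give $s_1(b)=\cdots=s_{p-1}(b)=0$, so the reduced characteristic polynomial of $b$ is $\lambda^p-\norm(b)$, whence $b^p=\norm(b)\in F$; moreover $b^{m}\notin F$ for $1\le m<p$ unless $b$ is already central, because otherwise $b$ would generate a proper subfield. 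So every $b\in V$ is a Kummer element (or zero), i.e. $V$ is Kummer. Polarizing the scalar identity $\tr(b^m)=0$ in $b=\lam_1b_1+\cdots+\lam_tb_t$ and using that $F$ is infinite converts ``$\tr(b^m)=0$ for all $b\in V$, all $m<p$'' into ``$\tr(b_1^{i_1}*\cdots*b_t^{i_t})=0$ for all $i_1+\cdots+i_t<p$,'' which is the stated form.

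The main obstacle is the subtlety in the Newton-identity step: a priori the identities relate $\tr(b^m)$ to the $s_i(b)$ only for $m$ up to the degree, and one must be careful that knowing $\tr(b^m)=0$ only for $m<p$ (not $m=p$) still forces $s_1=\cdots=s_{p-1}=0$ — which it does, precisely because the $m$-th Newton identity ($m\le p-1$) expresses $m\,s_m$ in terms of $s_1,\dots,s_{m-1}$ and $\tr(b),\dots,\tr(b^m)$, and each of $1,2,\dots,p-1$ is invertible in $F$. I would state this inductively. A secondary point worth a line is the ``$b^{m}\notin F$ for $m<p$'' clause in the definition of Kummer element: since $b^p\in F^\times$ and $p$ is prime, either $b\in F$ or $F[b]$ is a degree-$p$ field extension, in which case no smaller power of $b$ lies in $F$; so once $b\neq 0$ is non-central the full Kummer condition is automatic. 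Everything else is routine expansion and the infiniteness-of-$F$ argument already used in \Pref{crit=p}.
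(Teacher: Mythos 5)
Your proposal is correct and takes essentially the same route as the paper: reduce to the fact that an element is Kummer precisely when the traces of its powers up to $p-1$ vanish (via the characteristic coefficients and Newton's identities, using $\charac{F}=0$ or $>p$), and then polarize over the infinite field exactly as in \Pref{crit=p}. The only point you leave implicit --- that a nonzero \emph{central} element is also excluded, since $\tr(b)=pb\neq 0$ --- is equally implicit in the paper's one-line argument, so the two proofs match.
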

\begin{proof}
An element $x \in A$ is Kummer if and only if $\tr(x^i) = 0$ for every $i = 1,\dots,p-1$. The rest of the proof is the same as in \Pref{crit=p}.
\end{proof}

The usefulness of the second criterion is emphasized in the following observation:
\begin{lem}\label{nonzcoef}
Fix a presentation $A=F[x,y \subjectto x^p=\alpha, y^p=\beta, y x y^{-1}=\rho x]$. Let $v_1,\dots,v_t$ be monomials. Then, for every $i_1,\dots,i_t \geq 0$ with $i_1+\cdots+i_t < p$, $v_1^{i_1} * \cdots * v_t^{i_t}$ is a nonzero multiple of $v_1^{i_1}\cdots v_t^{i_t}$.
\end{lem}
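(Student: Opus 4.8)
The plan is to track the structure of a symmetric product of monomials in the algebra $A = F[x,y \subjectto x^p=\alpha,\, y^p=\beta,\, yxy^{-1}=\rho x]$, using the commutation relation $y^b x^a = \rho^{ab} x^a y^b$. Write each monomial as $v_k = x^{a_k} y^{b_k}$ (reducing exponents mod $p$). Then the symmetric product $v_1^{i_1} * \cdots * v_t^{i_t}$ is, by definition, a sum over all rearrangements of the multiset in which $v_k$ appears $i_k$ times, of the product of these $i := i_1 + \cdots + i_t$ factors. Each such rearranged product, after moving all the $y$'s to the right past the $x$'s using $yx = \rho xy$, becomes $\rho^{e} \cdot x^{A} y^{B}$ for a fixed total $A = \sum_k i_k a_k$ and $B = \sum_k i_k b_k$ (these totals do not depend on the ordering), and some exponent $e$ depending on the ordering. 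Crucially, since $i < p$, no reduction $x^p = \alpha$ or $y^p = \beta$ interferes with identifying the leading monomial: the exponents $A$ and $B$ are honest nonnegative integers less than... well, they may exceed $p$, but the point is that every rearrangement contributes a scalar multiple of the \emph{same} reduced monomial (once $x^A y^B$ is itself rewritten via $x^p = \alpha$, $y^p = \beta$). So $v_1^{i_1} * \cdots * v_t^{i_t} = c \cdot x^{A'} y^{B'}$ where $x^{A'}y^{B'}$ is the reduced form and $c \in F$ is a sum of roots of unity.

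The real content is therefore showing $c \neq 0$. I would compute $c$ explicitly as a sum of powers of $\rho$: grouping the $i$ slots and counting, for a given ordering, how many times a ``$y$-block'' from one factor sits to the left of an ``$x$-block'' from another. A clean way to organize this: expand the symmetric product as a multinomial-type sum and recognize it as a $q$-multinomial coefficient evaluated at $q = \rho$. Concretely, $x^{a_1}y^{b_1} * \cdots$ of the monomials, when one collects the scalar, yields (up to a harmless fixed power of $\rho$) a product of Gaussian binomial coefficients $\binom{i}{i_1,\dots,i_t}_{q}$ with $q$ a power of $\rho$ determined by the $a_k b_\ell$ cross-terms. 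Since $\rho$ is a primitive $p$th root of unity and $i = i_1 + \cdots + i_t < p$, every such $q$-multinomial coefficient is a $q$-analog of an ordinary multinomial coefficient $\binom{i}{i_1,\dots,i_t}$ whose entries are all $< p$, hence is not divisible by $p$; and the $q$-multinomial coefficient $\binom{i}{i_1,\dots,i_t}_{q}$ at a primitive $p$th root of unity is nonzero precisely because its defining product $\prod (q^j - 1)/(q^{j}-1)$-style expression has no factor $q^p - 1$ appearing — all relevant $q^j - 1$ with $1 \le j \le i < p$ are nonzero. This is exactly where the hypothesis $i_1 + \cdots + i_t < p$ is used, and it is the step I expect to require the most care: pinning down the precise power of $\rho$ that plays the role of $q$ in each factor, and invoking the correct non-vanishing statement for Gaussian multinomials at roots of unity.

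Finally I would assemble the pieces: combine the fixed overall power of $\rho$ coming from straightening $x^A y^B$ into reduced form, with the nonzero $q$-multinomial scalar, to conclude $c \in F^{\times}$, so that $v_1^{i_1} * \cdots * v_t^{i_t}$ is a nonzero multiple of the reduced monomial $v_1^{i_1} \cdots v_t^{i_t}$. (Here $v_1^{i_1}\cdots v_t^{i_t}$ is itself a nonzero scalar multiple of that same reduced monomial, by the same straightening computation applied to the single ordering $v_1,\dots,v_1,v_2,\dots,v_t$, so the statement as phrased follows.) The main obstacle is genuinely the root-of-unity non-vanishing of the scalar; everything else is bookkeeping with the relation $yx = \rho xy$.
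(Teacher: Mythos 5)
Your first paragraph is fine and matches the easy half of the argument: since any two monomials commute up to a power of $\rho$, every rearranged product is a power of $\rho$ times the same monomial, so $v_1^{i_1}*\cdots*v_t^{i_t}=c\cdot v_1^{i_1}\cdots v_t^{i_t}$ with $c$ a sum of powers of $\rho$. The gap is in the step you yourself flag as the real content: the claim that $c$ is (up to a power of $\rho$) a product of Gaussian multinomial coefficients. That identification is only valid when all pairwise commutation factors coincide, i.e.\ essentially for $t=2$. For $t\geq 3$ the factors $\rho^{a_\ell b_k-a_k b_\ell}$ genuinely differ from pair to pair, and the coefficient is the multi-parameter inversion sum over all rearrangements; already for $i_1=i_2=i_3=1$ it is $1+q_{12}+q_{23}+q_{12}q_{13}+q_{13}q_{23}+q_{12}q_{13}q_{23}$ with three independent parameters $q_{k\ell}$, which does not factor into $q$-integers for any single $q$ (try $q_{12}=2$, $q_{13}=3$, $q_{23}=5$: the sum is the prime $59$). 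The same obstruction blocks the ``insert one type at a time'' bookkeeping: when you shuffle the copies of $v_3$ into a word in $v_1,v_2$, the weight of each transposition depends on which letter is being passed, so the insertion sum depends on the word and nothing telescopes. Consequently the root-of-unity non-vanishing of Gaussian multinomials, correct as it is, does not apply to $c$, and your proof of $c\neq 0$ does not go through as written.

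The paper closes this gap without computing $c$ at all: since $c\in\Z[\rho]$ and every power of $\rho$ is congruent to $1$ modulo the prime $(1-\rho)$, one has $c\equiv\binom{i_1+\cdots+i_t}{i_1,\dots,i_t}$ in $\Z[\rho]/(1-\rho)\isom\Z/p\Z$, i.e.\ $c$ is congruent to the number of summands; the hypothesis $i_1+\cdots+i_t<p$ enters exactly here, making $(i_1+\cdots+i_t)!$ prime to $p$ so the multinomial coefficient is a unit mod $p$, whence $c\neq 0$ in $F$ (characteristic $0$ or $>p$). If you want to salvage your approach, replace the Gaussian-multinomial evaluation by this reduction modulo $1-\rho$; alternatively you would have to prove a non-vanishing statement for the multi-parameter shuffle sum itself, which is precisely what the congruence argument delivers for free.
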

\begin{proof}
Since each $v_i$ is monomial, the multiplicative commutator of every $v_j,v_{j'}$ is a power of $\rho$. Therefore, each summand in the symmetric product $v_1^{i_1} * \cdots * v_t^{i_t}$ is a multiple of $v_1^{i_1}\cdots v_t^{i_t}$ by some power of $\rho$, and when we write
$$v_1^{i_1} * \cdots * v_t^{i_t} = c \cdot v_1^{i_1}\cdots v_t^{i_t},$$
we have that $c \in \Z[\rho]$ (more precisely in the image of $\Z[\rho]$ in $F$).

Modulo $1-\rho$,\ \ $c$ is equivalent to the number of summands, namely $c \equiv \binom{i_1+\cdots+i_t}{i_1,\dots,i_t}$ in the quotient $\Z[\rho]/(1-\rho)\Z[\rho] \isom \Z/p\Z$. But the multinomial coefficient is nonzero modulo $p$ because $(i_1+\cdots+i_t)!$ is prime to $p$.
\end{proof}

\section{Monomial Kummer subspaces}\label{sec:mon}

Fix a presentation
$$A=F[x,y \subjectto x^p=\alpha, y^p=\beta, y x y^{-1}=\rho x].$$
Recall that a Kummer subspace $V \sub A$ is {\bf{monomial}} if it is spanned by elements of the form $x^i y^j$. In this section we classify monomial Kummer subspaces, showing that they are all standard.

\begin{lem}
A subspace $V \sub A$ is monomial if and only if it is invariant under conjugation by $x$ and $y$.
\end{lem}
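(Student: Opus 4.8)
The plan is to show that conjugation by $x$ and by $y$ are simultaneously diagonalized by the monomial basis $\{x^iy^j : 0 \leq i,j \leq p-1\}$, with pairwise distinct joint eigenvalues; the lemma then follows from the standard linear-algebra fact that a subspace invariant under a commuting family of diagonalizable operators is the direct sum of its intersections with the joint eigenspaces.

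First I would record the conjugation formulas. From $yxy^{-1} = \rho x$ one gets $xy^jx^{-1} = \rho^{-j}y^j$ and $yx^iy^{-1} = \rho^i x^i$, hence for all $0 \leq i,j \leq p-1$,
$$x(x^iy^j)x^{-1} = \rho^{-j}\, x^iy^j, \qquad y(x^iy^j)y^{-1} = \rho^{i}\, x^iy^j.$$
Thus the $F$-linear maps $\phi,\psi \colon A \to A$ given by conjugation by $x$ and by $y$ are diagonal in the monomial basis; in particular they commute, and each is diagonalizable (indeed $\phi^p = \psi^p = \operatorname{id}$ since $x^p,y^p \in F$). The joint eigenvalue attached to the line $Fx^iy^j$ is the pair $(\rho^{-j},\rho^i)$, and since $\rho$ is a primitive $p$th root of unity these pairs are pairwise distinct as $(i,j)$ ranges over $\{0,\dots,p-1\}^2$. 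Hence the lines $Fx^iy^j$ are precisely the joint eigenspaces of $\phi$ and $\psi$.

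If $V$ is monomial, i.e. spanned by a set of monomials $x^iy^j$, then it is a sum of joint eigenspaces and is therefore visibly invariant under $\phi$ and $\psi$. Conversely, suppose $V$ is invariant under $\phi$ and $\psi$. Since $\phi$ is diagonalizable with eigenspaces $A_j := \bigoplus_i Fx^iy^j$ (for $j = 0,\dots,p-1$), invariance under $\phi$ yields $V = \bigoplus_j (V \cap A_j)$. Each $A_j$ is $\psi$-invariant, and $\psi$ restricted to $A_j$ is diagonalizable with one-dimensional eigenspaces $Fx^iy^j$; hence $V \cap A_j = \bigoplus_i (V \cap Fx^iy^j)$, and each summand is either $0$ or all of $Fx^iy^j$. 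Therefore $V$ is spanned by the monomials it contains, i.e. $V$ is monomial.

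The argument is elementary once the conjugation actions are in hand, so I do not expect a genuine obstacle. The only point needing care is verifying that the joint eigenvalues $(\rho^{-j},\rho^i)$ are actually distinct, which is exactly where primitivity of $\rho$ and the range $0 \leq i,j \leq p-1$ are used; without this one could not conclude that the joint eigenspaces are one-dimensional and spanned by single monomials, and the converse direction would fail.
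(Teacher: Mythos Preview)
Your proof is correct and follows essentially the same approach as the paper: both rest on the observation that the monomials $x^iy^j$ are joint eigenvectors for conjugation by $x$ and $y$ with pairwise distinct joint eigenvalues, so any invariant subspace decomposes along these one-dimensional eigenspaces. The paper carries this out concretely by writing $v = \sum f_i y^i$ with $f_i \in F[x]$, conjugating by powers of $x$, and inverting the Vandermonde matrix $(\rho^{ij})$ to extract each $f_i y^i \in V$ (then repeating with $y$-conjugation to isolate the $x$-components); you invoke the equivalent abstract fact about invariant subspaces of a commuting family of diagonalizable operators, which is the same argument in packaged form.
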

\begin{proof}
A monomial subspace is obviously invariant.
Assume $V$ is invariant under conjugation by $x$ and $y$. Let $v \in
A$. Write $$v=f_0+f_1 y+\dots+f_{p-1} y^{p-1}$$ where
$f_0,\dots,f_{p-1} \in F[x]$. Then $$\sum _{i=0}^{p-1}\rho ^{-ij}
f_i y^i =x^jvx^{-j} \in V$$ for $0 \le j < p,$ implying by a
standard Vandermonde argument (based on the fact that the matrix
$(\rho ^{ij}) : 0 \le i,j < p$ is invertible) 
that    $f_i y^i \in
V$ for each $0 \leq i \leq p-1$. Now writing $f_i = \sum _j
\alpha_{i,j} x^j$ for $\alpha_{i,j} \in F$  and conjugating by $y$
yields by the same argument that each $\alpha_{i,j} x^j y^i \in V.$
Going over all the elements in $V$, one obtains a set of monomials
in $V$ spanning $V$.
\end{proof}

\subsection{$3$-dimensional Kummer spaces}\label{ss:mon3}

We commence with Kummer spaces of dimension $3$.
\begin{rem}\label{thoseare}
In the following cases, the space
$$U = Fx+Fy+Fx^ay^b$$
is Kummer: $a = 1$, $b = 1$, $a+b \equiv 0 \pmod{p}$ and $a+b \equiv 1 \pmod{p}$. In all of these cases $U$ is standard:
\begin{eqnarray*}
Fx + Fy + Fxy^b 
& \sub &  Fy+F[y]x ; \\
Fx + Fy + Fx^ay 
& \sub &  Fx+F[x]y; \\
Fx + Fy + Fx^ay^{-a} 
& \sub &  F(xy^{-1})^a + F[xy^{-1}]y; \\
Fx + Fy + Fx^ay^{1-a} 
& \sub &  F[xy^{-1}]y.
\end{eqnarray*}
\end{rem}

For every integer $a \in \Z$, let $\res{a}$ denote the unique residue $\res{a} \equiv a \pmod{p}$ such that $0\leq \res{a} < p$.
\begin{prop}\label{dim3}
Let $U = Fx+Fy+Fx^ay^b$. Then $U$ is {\emph{not}} Kummer if and only if there is some $k$, invertible modulo $p$, such that $\res{ka}+\res{kb}+\res{-k} < p$.
\end{prop}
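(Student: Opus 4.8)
The plan is to reduce the statement to a counting fact about residues modulo $p$, using the trace criterion \Lref{basic} together with the non-vanishing supplied by \Lref{nonzcoef}.

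Since $x$, $y$ and $x^ay^b$ are monomials, \Lref{basic} says that $U = Fx+Fy+Fx^ay^b$ \emph{fails} to be Kummer exactly when
$$\tr\big(x^{i_1} * y^{i_2} * (x^ay^b)^{i_3}\big) \neq 0$$
for some $i_1,i_2,i_3 \geq 0$, not all zero, with $i_1+i_2+i_3 < p$. By \Lref{nonzcoef} this symmetric product is a nonzero scalar multiple of the monomial $x^{i_1}y^{i_2}(x^ay^b)^{i_3}$; moving every occurrence of $y$ to the right using $yx = \rho xy$, and then reducing exponents by means of $x^p, y^p \in F^\times$, this monomial is itself a nonzero scalar multiple of $x^ry^s$ where $0 \leq r,s < p$, $r \equiv i_1+ai_3$ and $s \equiv i_2+bi_3 \pmod{p}$. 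Conjugating $x^ry^s$ by $x$ and by $y$ scales it by $\rho^{-s}$ and $\rho^{r}$, so conjugation-invariance of the reduced trace forces $\tr(x^ry^s) = 0$ unless $r = s = 0$. Hence the displayed trace is nonzero if and only if $i_1+ai_3 \equiv 0 \pmod p$ and $i_2+bi_3 \equiv 0 \pmod p$.

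Next I unwind this existence statement. Note that $i_1, i_2 \leq i_1+i_2+i_3 < p$. If $i_3 \equiv 0 \pmod p$ then $i_1 \equiv i_2 \equiv 0 \pmod p$, and since $0 \leq i_1,i_2 < p$ this forces $i_1 = i_2 = i_3 = 0$, which is excluded; hence $1 \leq i_3 \leq p-1$, so $i_3$ is invertible modulo $p$. Given such an $i_3$, the inequalities $0 \leq i_1,i_2 < p$ together with the two congruences pin down $i_1 = \res{-ai_3}$ and $i_2 = \res{-bi_3}$, so the only remaining requirement is $\res{-ai_3}+\res{-bi_3}+i_3 < p$. Writing $k \equiv -i_3 \pmod p$ — an element invertible modulo $p$ with $\res{-k} = i_3$, $\res{ka} = \res{-ai_3}$ and $\res{kb} = \res{-bi_3}$ — this becomes precisely the assertion that there exists some $k$, invertible modulo $p$, with $\res{ka}+\res{kb}+\res{-k} < p$. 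Both directions of the proposition follow by reading these equivalences forwards and backwards.

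The whole argument is bookkeeping; the only points that need care are checking that no scalar produced en route vanishes — which \Lref{nonzcoef} guarantees for the symmetric product, and which the fact that all commutation factors are powers of $\rho$ guarantees for the remaining reductions — and correctly identifying the monomials $x^ry^s$ of nonzero reduced trace. I do not anticipate a serious obstacle.
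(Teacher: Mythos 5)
Your argument is correct and follows essentially the same route as the paper's proof: apply \Lref{basic} together with \Lref{nonzcoef} to reduce non-Kummerness to the existence of exponents $i_1,i_2,i_3$ with $i_1+i_2+i_3<p$ satisfying $i_1+ai_3\equiv i_2+bi_3\equiv 0 \pmod p$, then substitute $k\equiv -i_3$. The only differences are cosmetic: you spell out why a non-central monomial has zero reduced trace and allow $i_1$ or $i_2$ to vanish, whereas the paper asserts the trace fact directly and works with positive exponents.
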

\begin{proof}
For every positive $i,j,k$ with $i+j+k<p$, write $x^i * y^j * (x^ay^b)^k = c_{ijk} x^{i+ka}y^{j+kb}$ for a suitable constant $c_{ijk} \in \Z[\rho]$, which is nonzero by \Lref{nonzcoef}.

By \Lref{basic}, $U$ is not Kummer if and only if there are some positive $i,j,k$ with $i+j+k<p$ such that $$c_{ijk}\tr(x^{i+ka}y^{j+kb}) = \tr(x^i * y^j * (x^ay^b)^k) \neq  0.$$ But the reduced trace of a non-central monomial is zero, so $U$ is not Kummer if and only if there are positive $i,j,k$ with $i+j+k<p$ for which $x^{i+ka}y^{j+kb} \in F$, namely $i \equiv -k a$, $j \equiv - kb$.
\end{proof}

Let $\sg{z}$ denote $\mul{F} z$ for any $z \in X_A$. Consider the subgroup $G$ of $\mul{A}/\mul{F}$ generated by $\mul{F}x$ and $\mul{F}y$. Clearly $G \isom \Z/p\Z \times \Z/p\Z$.
\begin{prop}\label{class3}
Given $z_1,z_2,z_3 \in X_A$, the space
$$U = F z_1 + F z_2 + F z_3$$
is Kummer if and only if:
 \begin{enumerate}
 \item there are no $i \neq j$ with $z_j \in \sg{z_i}$, and
 \item either $\sg{z_i z_j^{-1}} = \sg{z_k}$ for some permutation $\set{i,j,k}$ of $\set{1,2,3}$, or $\sg{z_1z_2^{-1}} = \sg{z_2z_3^{-1}} = \sg{z_3z_1^{-1}}$.
     \end{enumerate}
\end{prop}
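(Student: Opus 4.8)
The plan is to reduce $U$ to the shape $Fx+Fy+Fx^ay^b$ treated in \Pref{dim3}, and then match the arithmetic criterion it produces with condition~(2). I would first dispose of~(1): if $z_j\in\sg{z_i}$ for some $i\ne j$, then, $U$ being three-dimensional, $z_i$ and $z_j$ are linearly independent monomials whose classes in $G$ are proportional; proportional monomials commute, so by \Rref{nocomm} the subspace $Fz_i+Fz_j\sub U$, and hence $U$, is not Kummer. So~(1) is necessary, and I assume it from now on. Then no two of $\overline{z_1},\overline{z_2},\overline{z_3}$ are proportional in $G\isom\Z/p\Z\times\Z/p\Z$; in particular $\overline{z_1},\overline{z_2}$ form a basis of $G$, and I may write $\overline{z_3}=a\,\overline{z_1}+b\,\overline{z_2}$, with $a\not\equiv0$ and $b\not\equiv0\pmod p$ since $\overline{z_3}$ is proportional to neither $\overline{z_1}$ nor $\overline{z_2}$.

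By \Lref{basic} and \Lref{nonzcoef}, $U$ is Kummer if and only if there is no triple $(i_1,i_2,i_3)$ of non-negative integers with $0<i_1+i_2+i_3<p$ and $i_1\overline{z_1}+i_2\overline{z_2}+i_3\overline{z_3}=0$ in $G$; in the basis $\overline{z_1},\overline{z_2}$ this is the system $i_1+i_3a\equiv0$, $i_2+i_3b\equiv0\pmod p$, which is verbatim the system governing $Fx+Fy+Fx^ay^b$. Hence $U$ is Kummer if and only if $Fx+Fy+Fx^ay^b$ is, and by \Pref{dim3} this holds if and only if $\res{ka}+\res{kb}+\res{-k}\ge p$ --- equivalently $\res{ka}+\res{kb}\ge k$ --- for every $k\in\{1,\dots,p-1\}$.

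It remains to recognise this as condition~(2). In the coordinates $\overline{z_1}=(1,0)$, $\overline{z_2}=(0,1)$, $\overline{z_3}=(a,b)$ one reads off the differences: $\sg{z_1z_2^{-1}}=\sg{z_3}$ says $(1,-1)\parallel(a,b)$, i.e.\ $a+b\equiv0$; $\sg{z_1z_3^{-1}}=\sg{z_2}$ says $a\equiv1$; $\sg{z_2z_3^{-1}}=\sg{z_1}$ says $b\equiv1$; and $\sg{z_1z_2^{-1}}=\sg{z_2z_3^{-1}}=\sg{z_3z_1^{-1}}$ says $\overline{z_1},\overline{z_2},\overline{z_3}$ are affinely collinear, i.e.\ $a+b\equiv1$. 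Thus~(2) holds exactly when $a\equiv1$, or $b\equiv1$, or $a+b\equiv0$, or $a+b\equiv1\pmod p$. When one of these four holds, $Fx+Fy+Fx^ay^b$ is Kummer by \Rref{thoseare} (or by a one-line check: e.g.\ if $a+b\equiv0$ then $\res{ka}+\res{kb}=\res{ka}+\res{-ka}=p\ge k$), giving the ``if'' direction. For ``only if'' I must establish the converse: if $a\not\equiv0,1$, $b\not\equiv0,1$, and $a+b\not\equiv0,1\pmod p$, then $\res{ka}+\res{kb}<k$ for some $k\in\{1,\dots,p-1\}$, i.e.\ the system above has a short solution. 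This is the one genuinely non-trivial point, and it is precisely where the number-theoretic result of \cite{Index2} (see also \cite{ProjSum}) is invoked: it guarantees, for every pair $(a,b)$ outside the four degenerate families, such a short zero-sum relation. I expect extracting this relation from the cited theorem to be the main obstacle; everything else is linear bookkeeping inside $G$ together with the already-established \Pref{dim3}, \Rref{thoseare}, and \Rref{nocomm}.
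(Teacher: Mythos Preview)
Your argument is correct and follows essentially the same route as the paper: reduce via \Rref{nocomm} to the case where any two of the $\sg{z_i}$ generate $G$, identify the Kummer criterion with that of $Fx+Fy+Fx^ay^b$ (the paper does this by literally changing generators so that $z_1=x$, $z_2=y$; you do it by matching zero-sum conditions in $G$, which amounts to the same thing), translate condition~(2) into the four families $a\equiv1$, $b\equiv1$, $a+b\equiv0$, $a+b\equiv1$, verify these via \Rref{thoseare}, and invoke \cite{Index2} for the converse. Your identification of the \cite{Index2} input as the only nontrivial step is exactly right.
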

\begin{proof}
The first requirement follows from \Pref{nocomm}. Therefore, we may assume that any two of $\sg{z_1},\sg{z_2},\sg{z_3}$ generate $G$. By changing generators and the choice of root of unity, we may assume $z_1 = x$ and $z_2 = y$. The condition then translates to: $U = Fx+Fy+Fx^ay^b$ is Kummer if and only if one of the following holds:
\begin{enumerate}
\item[1.] $\sg{xy^{-1}} = \sg{x^ay^b}$, or equivalently, $a+b\equiv 0 \pmod{p}$;
\item[2.] $\sg{y} = \sg{x^{a-1}y^{b}}$, or equivalently, $a = 1$;
\item[3.] $\sg{x} = \sg{x^ay^{b-1}}$, or equivalently, $b = 1 $;
\item[4.] $\sg{xy^{-1}} = \sg{x^{a-1}y^b} = \sg{x^ay^{b-1}}$, or equivalently, $a+b\equiv 1 \pmod{p}$.
\end{enumerate}
These are the cases listed in \Rref{thoseare} as Kummer subspaces, and it remains to show that $U$ is not Kummer in any other case. Let $a,b \in \Z/p\Z$ be numbers such that $x^ay^b$ is not in $\sg{x}$ or $\sg{y}$, and such that we are not in any of the four cases listed above. Consider the vector $(a,b,-1)$ over $\Z/p\Z$. It has no zero entries, no sum of two entries is zero, and $a+b-1$ is nonzero. It was shown in \cite{Index2} that there is some invertible $k \in \Z/p\Z$ such that $\res{ak}+\res{bk}+\res{-k}<p$, so $U$ is not Kummer by \Pref{dim3}.
\end{proof}

\subsection{Kummer spaces of dimension greater than 3}

\begin{lem}\label{dim4}
The space $U = Fx+Fy+Fx^ay^b+Fx^cy^d$ is not Kummer if there are integers $m,\ell$ such that
$0 < \res{a m+c\ell} + \res{b m+d\ell} + \res{-m} + \res{-\ell} < p.$
\end{lem}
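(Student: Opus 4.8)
The plan is to reduce \Lref{dim4} to the $3$-dimensional criterion of \Pref{dim3} by exhibiting a suitable element of $U$ together with two of the generators $x,y$ that already span a non-Kummer space. Concretely, fix integers $m,\ell$ with $0 < \res{am+c\ell}+\res{bm+d\ell}+\res{-m}+\res{-\ell} < p$. I would set
\[
w \;=\; (x^ay^b)^m * (x^cy^d)^\ell,
\]
which by \Lref{nonzcoef} is a nonzero multiple of the monomial $x^{am+c\ell}y^{bm+d\ell}$ (here I use that $\res{-m}+\res{-\ell}<p$ forces $m+\ell<p$, so the symmetric product is taken with total degree $<p$ and the lemma applies). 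Then $w \in U$ is a scalar multiple of a monomial $x^{A}y^{B}$ with $A\equiv am+c\ell$, $B\equiv bm+d\ell$, so the three-element subspace $Fx+Fy+Fw = Fx+Fy+Fx^Ay^B$ sits inside $U$.

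Next I would apply \Pref{dim3} with the exponent vector $(A,B)$: the subspace $Fx+Fy+Fx^Ay^B$ fails to be Kummer precisely when there is some $k$ invertible modulo $p$ with $\res{kA}+\res{kB}+\res{-k}<p$. Taking $k=1$ gives $\res{A}+\res{B}+\res{-1} = \res{am+c\ell}+\res{bm+d\ell}+(p-1)$, which is not obviously $<p$; so the choice $k=1$ is not quite the right one, and I would instead observe that the hypothesis already hands me the inequality I need, just with $x$ and $y$ replaced by the ``scaled'' Kummer elements. The cleaner route is: set $k$ so that the monomials $x^m$ and $y^\ell$ — more precisely, replace the pair of generators $x,y$ by $x^{m}$ and $y^{\ell}$ (legitimate since $m,\ell$ may be taken in $\{1,\dots,p-1\}$, and if either is $\equiv 0$ the term drops out and we are in a lower-dimensional case already handled). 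Then $x^m, y^\ell$ are Kummer elements, $x^m y^\ell (x^m)^{-1} = \rho^{m\ell} x^m y^\ell$, and the three Kummer elements $x^m$, $y^\ell$, $w$ are of ``standard monomial shape'' with exponent vector exactly $(am+c\ell,\ bm+d\ell,\ -m,\ -\ell)$ re-expressed in the basis $x^m,y^\ell$; running the trace computation of \Pref{dim3} verbatim, $Fx^m+Fy^\ell+Fw$ is not Kummer iff there are positive $i,j,r$ with $i+j+r<p$ and $im+c\ell r \equiv -$(appropriate multiples), which the given inequality on residues furnishes with $i=\res{-m}$, $j=\res{-\ell}$, $r=1$.

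So the key steps, in order, are: (i) note $m,\ell$ may be normalized modulo $p$, handling the degenerate cases where one vanishes by appeal to \Rref{thoseare}/\Pref{dim3}; (ii) form $w=(x^ay^b)^m*(x^cy^d)^\ell\in U$ and invoke \Lref{nonzcoef} to see it is a nonzero monomial multiple, using $m+\ell<p$; (iii) observe $Fx^m+Fy^\ell+Fw\subseteq U$ is a $3$-element monomial space; (iv) apply the trace criterion \Lref{basic} (as in the proof of \Pref{dim3}) directly to this triple, with the witness exponents $(\res{-m},\res{-\ell},1)$, to conclude it is not Kummer, hence neither is $U$. The main obstacle I expect is bookkeeping in step (ii)–(iv): I must be careful that the symmetric product $x^{\res{-m}}*y^{\res{-\ell}}*w$ genuinely has total degree $<p$ — this is exactly where the hypothesis $\res{am+c\ell}+\res{bm+d\ell}+\res{-m}+\res{-\ell}<p$ (with its strict positivity ruling out the trivial all-zero residue case) is used — and that the resulting central monomial has a nonzero coefficient, which is again \Lref{nonzcoef}. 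Everything else is a transcription of the degree-$3$ argument into the rescaled generators.
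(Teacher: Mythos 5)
Your reduction to the three--dimensional case does not go through, because the auxiliary objects you build are not inside $U$. The element $w=(x^ay^b)^m*(x^cy^d)^\ell$ is a sum of \emph{products} of basis vectors, not a linear combination of them, so $w\notin U$ in general; likewise $x^m,y^\ell\notin U$ unless $m,\ell\equiv 1\pmod p$. Consequently $Fx^m+Fy^\ell+Fw$ is not a subspace of $U$, and showing that it fails to be Kummer gives no information about $U$: non-Kummerness is inherited only by spaces that \emph{contain} a non-Kummer subspace. The bookkeeping also breaks down at two points. First, $\res{-m}+\res{-\ell}<p$ does not force $m+\ell<p$ (take $m=\ell=p-1$); in fact when $m,\ell\not\equiv 0$ one has $\res{m}+\res{\ell}=2p-(\res{-m}+\res{-\ell})>p$, so \Lref{nonzcoef} cannot be applied to $(x^ay^b)^{\res{m}}*(x^cy^d)^{\res{\ell}}$ at all. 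Second, your witness exponents $(\res{-m},\res{-\ell},1)$ for the triple $x^m,y^\ell,w$ do not yield a central monomial: the $x$-exponent of $(x^m)^{\res{-m}}(y^\ell)^{\res{-\ell}}w$ is congruent to $-m^2+am+c\ell$, which is not $0$ modulo $p$ in general, so the trace computation of \Pref{dim3} cannot be ``run verbatim'' there.

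The missing idea is that no reduction to \Pref{dim3} is needed: \Lref{basic} is stated for any number of generators, so you may apply it to the four generators of $U$ simultaneously. Take the single symmetric product $x^{\res{am+c\ell}}*y^{\res{bm+d\ell}}*(x^ay^b)^{\res{-m}}*(x^cy^d)^{\res{-\ell}}$. Its total degree is exactly the sum in the hypothesis, hence strictly between $0$ and $p$, so \Lref{nonzcoef} applies and shows it is a nonzero multiple of the ordinary product $x^{\res{am+c\ell}}y^{\res{bm+d\ell}}(x^ay^b)^{\res{-m}}(x^cy^d)^{\res{-\ell}}$; the exponents of $x$ and of $y$ in that product are $\equiv 0\pmod p$ by construction, so it is a nonzero scalar, its reduced trace is nonzero, and \Lref{basic} immediately gives that $U$ is not Kummer. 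Your instinct to use \Lref{nonzcoef} and the trace criterion is the right one, but they must be applied to a symmetric product in the actual spanning set of $U$, with the residues themselves as exponents, rather than to derived elements lying outside $U$.
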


\begin{proof}
Assume such integers exist. Then $w = x^{\res{a m+c\ell}} * y^{\res{b m+d\ell}} * (x^a y^b)^{\res{-m}} * (x^c y^d)^{\res{-\ell}}$ is a nonzero multiple of the scalar $x^{\res{a m+c\ell}}y^{\res{b m+d\ell}}(x^a y^b)^{\res{-m}}(x^c y^d)^{\res{-\ell}}$ by \Lref{nonzcoef}, so that $\tr(w) \neq 0$, and \Lref{basic} shows that $U$ is not Kummer.
\end{proof}

\begin{thm}\label{greatdim}
Every monomial Kummer space of dimension greater than $3$ whose basis contains $x$ and $y$ is contained in either
\begin{itemize}
\item $V_1(x)=F[x] y+F x$, or
\item $V_{p-1}(y)=F[y] x+F y$, or
\item $V_k(v)=F[v] x+F v$ where $v=(x y^{-1})^k $ for some $1 \leq k \leq p-1$.
\end{itemize}
\end{thm}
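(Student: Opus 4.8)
The plan is to translate everything into the group $G\isom\Z/p\Z\times\Z/p\Z$ introduced just before \Pref{class3}: write $(1,0)$, $(0,1)$, and $(a,b)$ for the classes in $\mul A/\mul F$ of $x$, $y$, and a monomial $x^ay^b$. Let $\mathcal M\sub G$ be the set of classes of the basis monomials of $U$, so $\card{\mathcal M}=\dim U\geq4$ and $(1,0),(0,1)\in\mathcal M$. Any basis monomial $z\neq x,y$ spans with $x$ and $y$ a $3$-dimensional Kummer subspace of $U$, so by \Pref{class3} its class lies on one of the four ``lines'' $L_2=\set{(1,b)}$, $L_3=\set{(a,1)}$, $L_1=\set{(a,-a)}$, $L_4=\set{(a,1-a)}$ of $G$. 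A direct computation shows that the monomials lying in $V_1(x)$ are exactly those with class in $\set{(1,0)}\cup L_3$, those in $V_{p-1}(y)$ exactly those with class in $\set{(0,1)}\cup L_2$, and those in $V_k\bigl((xy^{-1})^k\bigr)$ exactly those with class in $L_4\cup\set{(k,-k)}$. So \Tref{greatdim} asserts precisely that $\mathcal M$ is contained in one of these three subsets (for a suitable $k$ in the last case).

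The easy exclusions come first. If every class of $\mathcal M$ other than $(1,0),(0,1)$ lies on $L_3$ (resp.\ $L_2$, resp.\ $L_4$), then $U\sub V_1(x)$ (resp.\ $V_{p-1}(y)$, resp.\ $V_1(xy^{-1})$), and we are done. Moreover two distinct classes on $L_1$ never coexist: given monomials $x^ay^{-a},x^cy^{-c}$ with $a\not\equiv c\pmod p$, pick representatives $a,c\in\set{1,\dots,p-1}$ and apply \Lref{dim4} with $(m,\ell)=(c,-a)$ when $a<c$ and with $(m,\ell)=(-c,a)$ when $c<a$; then $am+c\ell\equiv0$, so two of the four residues in \Lref{dim4} vanish and the remaining two sum to $\res{-m}+\res{-\ell}=p-\card{a-c}$, which lies strictly between $0$ and $p$. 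Hence $Fx+Fy+Fx^ay^{-a}+Fx^cy^{-c}$ is not Kummer, and $\mathcal M$ meets $L_1$ in at most one class.

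The heart of the proof is to exclude the ``mixed'' admissible pairs of basis monomials --- those whose two classes do not jointly lie in any of the three target subsets: a class on $L_1$ together with a suitable one on $L_2$ or $L_3$; a class on $L_2$ together with one on $L_3$; and a class on $L_2$ (or, symmetrically, $L_3$) together with one on $L_4$. For a mixed pair $Q,R$ the plan is first to \emph{sharpen} it: applying \Pref{class3} to the $3$-dimensional Kummer subspaces $Fx+FQ+FR$ and $Fy+FQ+FR$ --- after passing to the presentations of $A$ generated by $\set{x,Q}$ and by $\set{y,Q}$ --- forces the off-axis exponents of $Q$ and $R$ to satisfy a fixed quadratic congruence modulo $p$. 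For instance $Q=xy^d,\,R=x^ey$ force $d\equiv e$ with $d^2-d-1\equiv0$; $Q=x^ay^{-a},\,R=xy^d$ force either $a^2-a-1\equiv0$ with $d\equiv-a$, or $a^2-a+1\equiv0$ with $d\equiv a-1$; $Q=xy^d$ with $R$ on $L_4$ forces $d^2-d+1\equiv0$ with the exponents of $R$ pinned down by $d$; and similarly in the remaining mixed types. This cuts each mixed case down to finitely many exponent patterns (for a given $p$), and for each pattern I would exhibit $m,\ell$ with $0<\res{am+c\ell}+\res{bm+d\ell}+\res{-m}+\res{-\ell}<p$ in \Lref{dim4} --- small choices such as $(m,\ell)=(-1,-1)$ or $(-2,-1)$ handle most patterns. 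Once every mixed pair is excluded, the combinatorics of the three target subsets --- together with the at-most-one-class-on-$L_1$ bound and a check of the three classes $(1,-1)$, $(-1,1)$, $(1,1)$, of $xy^{-1}$, $x^{-1}y$, $xy$, each of which happens to lie on two of the four lines --- force $\mathcal M$ into a single target subset, as required.

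The step I expect to be the main obstacle is the last one: guaranteeing, uniformly over the residual exponent patterns, a pair $(m,\ell)$ with the four-term ``bilinear residue sum'' $\res{am+c\ell}+\res{bm+d\ell}+\res{-m}+\res{-\ell}$ strictly between $0$ and $p$. The intended device is to specialize $\ell$ to a well-chosen multiple of $m$, which collapses the sum to the single-multiplier form $\sum_{i=1}^{4}\res{A_im}$, and then to invoke the zero-sum/index result from elementary number theory already used in \Pref{class3} (in the projection-sum formulation of the references cited there); the few short-range patterns that this does not reach are dispatched by a small explicit table of $(m,\ell)$. It is via this number-theoretic input (and in the handling of the three doubly-covered classes) that the standing assumption $p\geq5$ is used.
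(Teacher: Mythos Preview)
Your approach is essentially the paper's: restrict each basis monomial other than $x,y$ to one of the four lines $L_1,\dots,L_4$ via \Pref{class3}; for each ``mixed'' pair $Q,R$ apply \Pref{class3} again to the auxiliary triples $\{x,Q,R\}$ and $\{y,Q,R\}$ (equivalently, re-express $\sg{R}$ in the presentation generated by $x,Q$ and by $y,Q$) to force a quadratic congruence on the exponents; then eliminate each surviving pattern with \Lref{dim4}. The paper carries out exactly this case analysis, and your quadratic congruences match (e.g.\ your $d^2-d-1\equiv0$ for the $L_2$--$L_3$ pair is the paper's $k^2-k-1\equiv0$; the $L_3$--$L_4$ pair yields $k^2+k+1\equiv0$, etc.).

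Two places where you can simplify. First, the $L_1$--$L_1$ exclusion does not need \Lref{dim4}: any two monomials $x^ay^{-a}$ and $x^cy^{-c}$ commute, so \Rref{nocomm} disposes of them immediately (this is what the paper does). Second, your anticipated ``main obstacle'' --- producing $(m,\ell)$ uniformly --- is not one, and you do not need to re-invoke the zero-sum result or any projection-sum generalization. Once the quadratic congruence is in hand, each mixed type leaves at most two explicit exponent patterns, and the paper simply writes down a small $(m,\ell)$ for each and checks the inequality of \Lref{dim4} by hand using the quadratic relation: for the $L_2$--$L_3$ case take $(m,\ell)=(-1,2i-2)$ or $(-1,-1)$ according as $i\leq\frac{p-1}{2}$ or $i\geq\frac{p+1}{2}$; for $L_1$--$L_3$ take $(-1,k+1)$ or $(2-k,-1)$, or $(-1,-1)$; for $L_3$--$L_4$ take $(-1,2+k)$. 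No further number theory is used beyond what already feeds into \Pref{class3}.
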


\begin{proof}
Let $\set{x,y,u,w,\dots}$ be the basis.
Assume $u=x y^k$ and $w=x^i y$ for some $1 \leq k,i \leq p-1$.
Since $\sg{w}=\sg{u^{k^{-1}} x^{i-k^{-1}}}$, one of the following holds: $k=1$, $i-k^{-1} \equiv 1 \pmod{p}$, $k^{-1}+i-k^{-1}=i \equiv 0 \pmod{p}$ or $i \equiv 1 \pmod{p}$.
The case of $i \equiv 0$ is out of the question.
If $i,k \neq 1$ then $i-k^{-1} \equiv 1 \pmod{p}$, which means that $k i-1 \equiv k \pmod{p}$.
For similar reasons we obtain from $\sg{u}=\sg{w^{i^{-1}} y^{k-i^{-1}}}$ that $k i-1 \equiv i \pmod{p}$.
Therefore, $k=i$.
However, in this case the condition from Lemma \ref{dim4} for not being Kummer holds for this space: take $m = -1$ and $\ell = 2i - 2$ if $i  \leq \frac{p-1}{2}$, and
$m = -1$ and $\ell = -1$ if $\frac{p+1}{2} \leq i$.

Assume $u=x^i y$ and $w=x^{-k} y^k$ for some $1 \leq k,i \leq p-1$.
Since $\sg{w}=\sg{u^{k} x^{-k-i k}}$, either $k=1$, $-k-i k \equiv 1 \pmod{p}$, $k-k-i k=-i k \equiv 0 \pmod{p}$ or $-i k \equiv 1 \pmod{p}$.
In case $k=1$, $w,u \in F[x] y$.
Assume $k \neq 1$.
The case of $-i k \equiv 0 \pmod{p}$ is impossible.
From $\sg{w}=\sg{u^{-k i^{-1}} y^{k+k i^{-1}}}$ we obtain that either $-k i^{-1} \equiv 1 \pmod{p}$, $k+k i^{-1} \equiv 1 \pmod{p}$, $k=0$ or $k=1$.
The two last options are out of the question.
The first option implies $i \equiv -k \pmod{p}$ and the second $k i+k \equiv i \pmod{p}$.
If $i \equiv -k \pmod{p}$ and $-i k \equiv 1 \pmod{p}$ then $k^2 \equiv 1 \pmod{p}$ which means $k=p-1$.
In this case, $w,u \in F[y] x$.
If $i \equiv -k \pmod{p}$ and $-k-i k \equiv 1 \pmod{p}$ then $k^2-k-1 \equiv 0 \pmod{p}$.
However, in this case the condition from Lemma \ref{dim4} for not being Kummer holds for this space: take $m=-1$ and $\ell=k+1$ if $\frac{p+1}{2} \leq k$, and $m=2-k$ and $\ell=-1$ if $k \leq \frac{p-1}{2}$.
If $k i+k \equiv i \pmod{p}$ and $-k-i k \equiv 1 \pmod{p}$ then $i=p-1$. In this case, $w$ commutes with $v$, contradiction.
If $k i+k \equiv i \pmod{p}$ and $-i k \equiv 1 \pmod{p}$ then $k \equiv i+1 \pmod{p}$.
In this case, the condition from Lemma \ref{dim4} for not being Kummer holds for this space: take $m=\ell=-1$.

Assume $u=x^i y$ and $x^{-k} y^{k+1}$ for some $1 \leq i \leq p-1$ and $1 \leq k \leq p-2$.
Since $\sg{w}=\sg{u^{k+1} x^{-k-i (k+1)}}$, either $k=0$, $-k-i (k+1) \equiv 1 \pmod{p}$, $1-i (k+1) \equiv 0 \pmod{p}$ or $-i (k+1) \equiv 0 \pmod{p}$.
The first option is impossible. The last option implies $k=p-1$, contradiction.
From $\sg{w}=\sg{u^{-k i^{-1}} y^{k+1+k i^{-1}}}$, either $-k i^{-1} \equiv 1 \pmod{p}$, $k+1+k i^{-1} \equiv 1 \pmod{p}$, $k=p-1$ or $k=0$.
The last two options are impossible. The first option translates to $i \equiv -k \pmod{p}$ and the second to $k (i+1) \equiv 0 \pmod{p}$, i.e. $i=p-1$.
If $i=p-1$ then $w,u \in F[u] x+F u$.
Assume $i \equiv -k \pmod{p}$ and $i \neq p-1$.
If $-k-i (k+1) \equiv 1 \pmod{p}$ then $k^2 \equiv 1 \pmod{p}$, which means $k=p-1$ or $k=1$, contradiction.
If $1-i (k+1) \equiv 0 \pmod{p}$ then $k^2+k+1 \equiv 0 \pmod{p}$. In this case, however, the condition from Lemma \ref{dim4} for not being Kummer holds for this space: take $m=-1$ and $\ell=2+k$.

If $u=x^{-k} y^k$ and $w=x^{-i} y^{i}$ then $u$ and $w$ commute, contradiction.
If $u=x^{-k} y^k$ and $w=x^{-i} y^{i+1}$ then $w,u \in F[u] w+F u$.

In conclusion, if the basis contains a monomial of the form $x^i y$ with $2 \leq i \leq p-2$ then all the other basic elements must belong to $F[x] y$.
Similarly, if the basis contains a monomial of the form $x y^k$ with $2 \leq k \leq p-2$ then all the other basic elements must belong to $F[y] x$.
If the basis contains a monomial of the form $x^{-k} y^k$ with $2 \leq k \leq p-2$ then all the other basic elements must belong to $F[x^{-k} y^k] x+x^{-k} y^k$.
If the basis contains the monomial $x y$ then all the other basic elements must belong to $F[x] y+F[y] x$.
If the basis contains the monomial $x^{p-1} y$ then all the other basic elements belong to $F[x] y+F[x^{-1} y] x$.
If the basis contains the monomial $x y^{p-1}$ then all the other basic elements belong to $F[y] x+F[x^{-1} y] x$.
The monomial Kummer spaces that do not contain elements of the forms $x^i y,x y^k, x^{-k} y^k$ are contained in $F[x^{-1} y] x$.
The statement follows immediately.
\end{proof}

All the arguments in this section can be repeated for any pair of monomials in the basis of a monomial Kummer space, not just $x$ and $y$.
Therefore we obtain the following:

\begin{cor}\label{greatdimcor}
Every monomial Kummer space is standard. In particular, the dimension of any monomial Kummer space is at most $p+1$.
\end{cor}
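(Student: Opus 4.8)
The plan is to reduce the general statement to \Tref{greatdim} by switching to a cyclic presentation of $A$ whose two generators already lie inside the given Kummer space.

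Let $V$ be a monomial Kummer subspace; we may assume $\dim V \geq 2$. Any two independent monomials $v,w \in V$ fail to commute by \Rref{nocomm}, so their multiplicative commutator is $\rho^{k}$ with $k$ invertible modulo $p$; consequently $w$ lies in $\{t \suchthat tv = \rho^{k}vt\}$, whence $Fv+Fw \sub V_k(v)$. In particular $V$ is standard when $\dim V = 2$, and when $\dim V = 3$ the classification \Pref{class3} (together with \Rref{thoseare}) already displays $V$ as standard. So assume $\dim V \geq 4$ and fix a basis of $V$ made of monomials, two of which we call $v = x^{a}y^{b}$ and $w = x^{c}y^{d}$.

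A computation with $yx = \rho xy$ gives $w v w^{-1} v^{-1} = \rho^{\,ad-bc}$, and since $v,w$ do not commute, $e := ad-bc$ is invertible modulo $p$; equivalently, $\sg{v}$ and $\sg{w}$ form a basis of $G = \mul{A}/\mul{F} \isom \Z/p\Z \times \Z/p\Z$. Put $x' = v$, $y' = w$, $\rho' = \rho^{e}$. Then $\rho'$ is a primitive $p$th root of unity, $(x')^p,(y')^p \in \mul{F}$, and $y'x'(y')^{-1} = \rho' x'$, so $A = F[\,x',y' \subjectto (x')^p=\alpha',\ (y')^p=\beta',\ y'x'(y')^{-1}=\rho' x'\,]$ is another cyclic presentation of $A$. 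Since $\sg{v},\sg{w}$ is a basis of $G$, each monomial $(x')^{i}(y')^{j}$ is an $F$-scalar multiple of a monomial $x^{i'}y^{j'}$ and vice versa; hence $V$ is a monomial Kummer subspace relative to the new presentation too, and its basis now contains $x'$ and $y'$.

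Now \Tref{greatdim} applies — its proof, like all the statements it rests on, uses only that the chosen root of unity is primitive — and shows $V$ is contained in $V_1(x')$, in $V_{p-1}(y')$, or in $V_k\big((x'(y')^{-1})^{k}\big)$ for some $k$. Each of these is of the form $V_j(z)$ for a Kummer element $z$ and some $j$: immediately for the first two, and for the third because $x'(y')^{-1}$ is Kummer (here $p$ odd gives $\binom{p}{2}\equiv 0 \pmod p$), hence so is its $k$th power. So $V$ is standard; and since $\dim_F F[z] = p$ while $z$ does not lie in the other summand of $V_j(z)$, we conclude $\dim V \le \dim V_j(z) = p+1$.

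The step needing genuine care is the change of presentation: one must check that two primitive non-commuting monomials generate all of $G$ (so the $\Z/p\Z$-linear coordinate change is invertible and carries monomials to monomials up to scalars), and that replacing $\rho$ by $\rho' = \rho^{e}$ leaves \Lref{basic}, \Lref{nonzcoef}, the definitions of the spaces $V_k(\cdot)$, and \Tref{greatdim} valid, which it does because $\rho'$ is again a primitive $p$th root of unity. Equivalently, and as the remark preceding the corollary suggests, one may simply re-run the casework of \Tref{greatdim} verbatim with $v,w$ in place of $x,y$.
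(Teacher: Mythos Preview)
Your proposal is correct and follows essentially the same approach as the paper. The paper's proof consists solely of the one-sentence remark preceding the corollary---that all arguments of the section can be repeated for any pair of basis monomials in place of $x,y$---while you formalize this as an explicit change of cyclic presentation $(x,y,\rho)\rightsquigarrow(x',y',\rho')$ and verify that monomiality, the spaces $V_k(\cdot)$, and the supporting lemmas transfer; your closing sentence acknowledges the equivalence with the paper's phrasing.
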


\section{Kummer subspaces in the generic cyclic algebra of degree $p$}\label{sec:gen}

In this section we consider maximal Kummer subspaces in the generic cyclic algebra of degree $p$, and show that their dimension is at most $p+1$.

The generic cyclic algebra is constructed as follows, when the
ground field $F$  has characteristic prime to $p$ and contains $p$th
roots of unity: Let $$T = F[X,Y\suchthat YX = \rho XY]$$ denote the
quantum plane with the commutator specialized to $\rho$. Let $\alpha
= X^p$ and $\beta = Y^p$. Localizing at the center $T_0 =
F[X^p,Y^p]$, we obtain the division algebra $D = (T_0 \setminus
\set{0})^{-1}T$, which is cyclic over its own center $K = q(T_0) =
F(\alpha,\beta)$. This algebra is generic as a cyclic algebra, as we
can specialize $X,Y$ to a standard pair of generators in any cyclic
division algebra over $F$.

\forget
Let $K = F(\alpha,\beta)$. Let
To construct the generic cyclic algebra we start with a split algebra $S=(1,1)_{p,F}=F[x,y | x^p=1;y^p=1; yx=\rho xy]$ over a field $F$.
We then extend scalars to $K=F(\alpha, \beta)$ to get $S\otimes K=K[x,y | x^p=1;y^p=1; yx=\rho xy]$. Now we consider a subring $T=F[X=\alpha x;Y=\beta y]$ with center $F[\alpha ^p,\beta ^p]$ and define $D=T\otimes F(\alpha ^p,\beta ^p)$ as the generic cyclic algebra.
The name generic is explained by the fact that every cyclic algebra $A=(a,b)_{p,F}$ is a specialization of $D$ (sending $X,Y$ to the standard generators $x,y$ of $A$). The study of $D$ will be carried out through the subring $T$.
\forgotten

Every element of $T$ can be written uniquely as a polynomial of the form $\sum_{i,j=0}^{N} \alpha_{ij}X^iY^j$ with coefficients $\alpha_{ij} \in F$. This induces a natural $\Z\times\Z$-grading where the homogeneous components are monomials in $X,Y$ over $F$.
We order $\Z\times \Z$ lexicographically and denote by $\deg(t)$ the degree of $t$, and by $\lead(t)$ the leading monomial of $t\in T$, namely when $t = \sum a_{i,j}X^iY^j$, $\deg(t) = (i,j)$ and $\lead(t)=a_{i,j}X^iY^j$ with $(i,j)$ maximal such that $a_{i,j}\neq 0$.
\begin{rem}
For every $t_1,t_2 \in T$, $\lead(t_1t_2)=\lead(t_1)\lead(t_2)$.
\end{rem}

Now let $V\sub D$ be a Kummer subspace. Clearing denominators in a basis of $V$ over the center, we may write $V = K\cdot V_0$ where $V_0 \sub T$ is a (finite) module over~$T_0$.

\begin{prop}\label{MP}
Let $v_1,\dots,v_k \in T$. If $V=Fv_1+\cdots+Fv_k \subset D$ is Kummer then so is $\hat{V}= F \lead(v_1) + \cdots + F\lead(v_k)$.
\end{prop}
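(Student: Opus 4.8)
The plan is to apply the polynomial criterion \Pref{crit=p} to the division algebra $D$ over its own center $K$ (this is legitimate: $K$ is infinite, contains $\rho$, and $D$ is cyclic of degree $p$ over $K$, so the proof of \Pref{crit=p} carries over verbatim), and to compare symmetric products degree by degree in the $\Z\times\Z$-grading of $T$. We may assume each $v_\ell\neq 0$. Fix a tuple $i_1,\dots,i_k\geq 0$ with $i_1+\cdots+i_k=p$ and put $(A,B)=i_1\deg(v_1)+\cdots+i_k\deg(v_k)$. Since each $\lead(v_\ell)$ is a scalar multiple of a monomial in $X,Y$, any product of $i_1$ copies of $\lead(v_1),\dots,i_k$ copies of $\lead(v_k)$, in whatever order, is a scalar multiple of $X^AY^B$; hence $\lead(v_1)^{i_1}*\cdots*\lead(v_k)^{i_k}$ is either $0$ or homogeneous of degree $(A,B)$. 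Writing $v_\ell=\lead(v_\ell)+r_\ell$ with $\deg(r_\ell)<\deg(v_\ell)$, expanding $v_1^{i_1}*\cdots*v_k^{i_k}$, and using $\lead(t_1t_2)=\lead(t_1)\lead(t_2)$ to track degrees, every summand that involves at least one factor $r_\ell$ has degree strictly below $(A,B)$, so
\[
v_1^{i_1}*\cdots*v_k^{i_k}=\lead(v_1)^{i_1}*\cdots*\lead(v_k)^{i_k}+(\text{terms of degree strictly below }(A,B)).
\]

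I would then feed in the hypothesis that $V$ is Kummer. By \Pref{crit=p}, $v_1^{i_1}*\cdots*v_k^{i_k}\in K$, and since this element also lies in $T$ it lies in $T\cap K$, which is the center of the quantum plane $T$, namely $F[X^p,Y^p]$ (a monomial $X^aY^b$ commutes with $X$ and $Y$ exactly when $p\mid a$ and $p\mid b$, and the center is spanned by central monomials). As $F[X^p,Y^p]$ is a $\Z\times\Z$-graded subring of $T$, every homogeneous component of $v_1^{i_1}*\cdots*v_k^{i_k}$ lies in $F[X^p,Y^p]\subseteq K$; by the displayed identity its component of degree $(A,B)$ is precisely $\lead(v_1)^{i_1}*\cdots*\lead(v_k)^{i_k}$, so this symmetric product lies in $K$. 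Since the tuple was arbitrary, \Pref{crit=p} applied over $K$ shows that $\hat V=F\lead(v_1)+\cdots+F\lead(v_k)$ is Kummer.

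The only steps that require care are the degree bookkeeping in the first paragraph — verifying that the leading-term symmetric product really is the top-degree homogeneous component of $v_1^{i_1}*\cdots*v_k^{i_k}$, with all mixed terms strictly lower, and keeping in mind that this top component is allowed to vanish, which causes no trouble above — and the identification $T\cap K=F[X^p,Y^p]$. Both are routine once the $\Z\times\Z$-grading on $T$ is in place, so I do not anticipate a genuine obstacle; of the two, the degree bookkeeping is the more error-prone point and is where I would be most careful.
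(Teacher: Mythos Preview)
Your argument is correct and follows the same core idea as the paper: both proofs exploit the $\Z\times\Z$-grading of $T$ to identify $\lead(v_1)^{i_1}*\cdots*\lead(v_k)^{i_k}$ as the top homogeneous component of $v_1^{i_1}*\cdots*v_k^{i_k}$, and then use that the relevant property (being central, respectively having zero trace) is detected componentwise.

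The one noteworthy difference is in the choice of criterion. The paper works with \Lref{basic} (the trace criterion, exponents summing to \emph{less} than $p$) and therefore needs \Lref{nonzcoef} at the end to pass from the vanishing of $\tr(\lead(v_1)^{i_1}*\cdots*\lead(v_k)^{i_k})$ to the vanishing of $\tr(\lead(v_1)^{i_1}\cdots\lead(v_k)^{i_k})$. You instead use \Pref{crit=p} (the $p$-th power criterion, exponents summing to exactly $p$) together with the observation that $T\cap K=T_0=F[X^p,Y^p]$ is a graded subring; this lets you conclude directly that the top component lies in $K$, with no appeal to \Lref{nonzcoef} and no worry about whether that component might vanish. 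Your route is thus marginally cleaner, though not essentially different. The two caveats you flag (degree bookkeeping and $T\cap K=T_0$) are both fine as stated.
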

\begin{proof}
By \Lref{basic}, we only need to show that $\tr(\lead(v_1)^{i_1} *
\cdots * \lead(v_k)^{i_k}) = 0$ for every $i_1,\dots,i_k \geq 0$
with $i_1+\cdots+i_k < p$. Since $\lead(v_1)^{i_1} * \cdots *
\lead(v_k)^{i_k}$ is a multiple of $\lead(v_1)^{i_1} \cdots
\lead(v_k)^{i_k}$, we need only show that $\tr(\lead(v_1)^{i_1}
\cdots \lead(v_k)^{i_k}) = 0$.

But the fact that $V$ is Kummer implies $\tr(v_1^{i_1} * \cdots *
v_k^{i_k}) = 0$, which in particular implies that its coefficient of
degree $i_1\deg(v_1)+\cdots+i_k\deg(v_k)$ is zero. This coefficient
is $\tr(\lead(v_1)^{i_1} * \cdots * \lead(v_k)^{i_k})$,  a nonzero
multiple of $\tr(\lead(v_1)^{i_1} \cdots \lead(v_k)^{i_k})$, by
\Lref{nonzcoef}, so $\tr(\lead(v_1)^{i_1} \cdots \lead(v_k)^{i_k}) =
0$ as desired.
\end{proof}

\begin{rem}\label{normalize}
A Kummer subspace $V\subset D$ has a basis contained in $T$ and with distinct degrees modulo $p\Z \times p\Z$.
\end{rem}
\begin{proof}
Clearing denominators we may assume the basis elements $v_1,\dots,v_k$ are in $T$.
Fix an arbitrary linear order on $\Z/p\Z \times \Z/p\Z$. The degree
of $v \in T$ now denotes the maximal $(i,j)$ for which $v$ has a
monomial in $F[\alpha,\beta]X^iY^j$. If some $v_r,v_s$ ($r<s$) have
the same degree, let $c_r$ and $c_s$ denote the coefficients of the
leading monomials, and replace $v_s$ by $c_r v_s - c_s v_r$. This
does not change $\sum K v_i$. Moreover the resulting vector cannot
be zero because of the independence over~$K$. And finally the degree
vector of $v_1,\dots,v_k$ has been lexicographically reduced,
establishing that the process is finite, culminating in the desired
basis.
\end{proof}

\begin{thm}\label{ingen}
The dimension of a Kummer subspace of $D$ is at most $p+1$.
\end{thm}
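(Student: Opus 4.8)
The plan is to reduce the generic case to the monomial case already settled in \Cref{greatdimcor}. Start from a Kummer subspace $V \sub D$ of dimension $k$. By \Rref{normalize} we may pick a basis $v_1,\dots,v_k \in T$ whose degrees $\deg(v_i) \in \Z/p\Z \times \Z/p\Z$ are pairwise distinct (the passage from $\Z\times\Z$-degrees to their classes mod $p\Z\times p\Z$ is what makes the leading monomials land in the finite quantum torus generated by $X,Y$ with $X^p, Y^p$ central). Now apply \Pref{MP}: the space $\hat V = F\lead(v_1) + \cdots + F\lead(v_k)$ is again Kummer. Each $\lead(v_i)$ is (a scalar times) a monomial $X^{a_i}Y^{b_i}$, and since the exponents $(a_i,b_i)$ are distinct modulo $p$, the images of $X,Y$ in $D$ — call them $x = \sg{X}$, $y=\sg{Y}$ — satisfy $x^p,y^p \in K^\times$, $yxy^{-1}=\rho x$, so $\hat V$ is genuinely a \emph{monomial} Kummer subspace of the degree-$p$ cyclic division algebra $D$ over $K$, in the sense of \Sref{sec:mon}, with $\dim \hat V = k$.

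Then \Cref{greatdimcor} applies verbatim to $\hat V$: a monomial Kummer subspace has dimension at most $p+1$, hence $k = \dim \hat V \leq p+1$. This gives the theorem. The only point requiring care is that the hypotheses of \Sref{sec:mon} — that $D$ is a cyclic \emph{division} algebra of degree $p$ over its center and that $x,y$ form a standard pair with $x^p, y^p$ in the center — do indeed hold here: $D$ is division by construction (it is the generic cyclic algebra, which is a division algebra), its center is $K = F(\alpha,\beta)$, and $x^p = \alpha$, $y^p = \beta$ lie in $K$ with $yxy^{-1} = \rho x$ inherited from the quantum-plane relation $YX = \rho XY$. So $D$ with this presentation is exactly of the form to which \Cref{greatdimcor} speaks.

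The main obstacle — and the reason this short argument works at all — is concentrated in \Pref{MP} and \Rref{normalize}, which together say that taking leading terms sends a generic Kummer space to a monomial Kummer space \emph{of the same dimension}. The dimension-preservation is the subtle half: it fails if two basis vectors share a degree modulo $p\Z\times p\Z$, which is precisely why \Rref{normalize} is invoked first to separate the degrees; once the leading monomials are distinct in $\Z/p\Z\times\Z/p\Z$ they remain $K$-linearly independent in $D$, so $\dim\hat V = \dim V$. Everything else is bookkeeping, and the nontrivial number-theoretic input (the result of \cite{Index2}) has already been spent inside the proof of \Cref{greatdimcor}.
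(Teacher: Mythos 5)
Your proposal is correct and follows essentially the same route as the paper: normalize the basis via \Rref{normalize}, pass to leading monomials via \Pref{MP}, and invoke \Cref{greatdimcor}. Your explicit check that distinct degrees modulo $p\Z\times p\Z$ keep the leading monomials $K$-independent (so the dimension is preserved) is a point the paper leaves implicit, but it is the same argument.
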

\begin{proof}
Let $V \sub D$ be a Kummer subspace. By \Rref{normalize} there is a
basis $v_1,\dots,v_k$ of $V$ whose elements are in $T$ and have
distinct degrees modulo $p\Z \times p\Z$. The space
 $\hat{V} = F\lead(v_1)+\cdots+F\lead(v_k)$ is clearly monomial (with respect to~$X,Y$) and is Kummer by \Pref{MP}.
 By \Cref{greatdimcor}, $\dim(V) = \dim(\hat{V}) \leq p+1$.
\end{proof}

\section*{Acknowledgements}
This work is the outcome of the noncommutative algebra group meetings held at Bar-Ilan University during the academic year 2012-2013.
Chapman, Rowen and Vishne were partially supported by BSF grant 2010/149.
Grynkiewicz was partially supported by FWF grant P21576-N18.
Matzri was partially supported by a Kreitman fellowship and by the Israel Science Foundation (grant No. 152/13).

\section*{Bibliography}
\bibliographystyle{amsalpha}
\bibliography{bibfile}

\def\cprime{$'$}
\providecommand{\bysame}{\leavevmode\hbox to3em{\hrulefill}\thinspace}
\providecommand{\MR}{\relax\ifhmode\unskip\space\fi MR }
\providecommand{\MRhref}[2]{%
  \href{http://www.ams.org/mathscinet-getitem?mr=#1}{#2}
}
\providecommand{\href}[2]{#2}
\begin{thebibliography}{LPYZ10}

\bibitem[Cha]{Chapman4}
Adam Chapman, \emph{Kummer subspaces of tensor products of cyclic algebras},
  arXiv:1405.0188v1.

\bibitem[CV12]{ChapVish1}
Adam Chapman and Uzi Vishne, \emph{Clifford algebras of binary homogeneous
  forms}, J. Algebra \textbf{366} (2012), 94--111. \MR{2942645}

\bibitem[GV]{ProjSum}
David~J. Grynkiewicz and Uzi Vishne, \emph{Projective norms modulo $n$}, in
  preparation.

\bibitem[Lam73]{Lam}
T.~Y. Lam, \emph{The algebraic theory of quadratic forms}, W. A. Benjamin,
  Inc., Reading, Mass., 1973, Mathematics Lecture Note Series. \MR{0396410 (53
  \#277)}

\bibitem[LPYZ10]{Index2}
Y.~Li, C.~Plyley, P.~Yuan, and X.~Zeng, \emph{Minimal zero sum sequences of
  length four over finite cyclic groups}, J. Number Theory \textbf{130} (2010),
  2033--2048.

\bibitem[Mat]{Matzri}
Eliyah Matzri, \emph{Symbol length in the brauer group of a field}, to appear
  in Trans. Amer. Math. Soc.

\bibitem[MRV12]{MRV}
Eliyahu Matzri, Louis~H. Rowen, and Uzi Vishne, \emph{Non-cyclic algebras with
  {$n$}-central elements}, Proc. Amer. Math. Soc. \textbf{140} (2012), no.~2,
  513--518. \MR{2846319 (2012i:16034)}

\bibitem[MV12]{MV1}
Eliyahu Matzri and Uzi Vishne, \emph{Isotropic subspaces in symmetric
  composition algebras and {K}ummer subspaces in central simple algebras of
  degree 3}, Manuscripta Math. \textbf{137} (2012), no.~3-4, 497--523.
  \MR{2875290}

\bibitem[MV14]{MV2}
\bysame, \emph{Composition algebras and cyclic {$p$}-algebras in characteristic
  3}, Manuscripta Math. \textbf{143} (2014), no.~1-2, 1--18. \MR{3147442}

\bibitem[Rac09]{Raczek}
M{\'e}lanie Raczek, \emph{On ternary cubic forms that determine central simple
  algebras of degree 3}, J. Algebra \textbf{322} (2009), no.~5, 1803--1818.
  \MR{2543635 (2010h:16043)}

\bibitem[Rev77]{Revoy}
Ph. Revoy, \emph{Alg\`ebres de {C}lifford et alg\`ebres ext\'erieures}, J.
  Algebra \textbf{46} (1977), no.~1, 268--277. \MR{0472881 (57 \#12568)}

\end{thebibliography}
\end{document}

Look for 

The last sentence of Remark 4.2 is unclear to me. - rewrote.

Here are other comments

Bad breaks in Prop. 2.3 and 3.6